\def\R{\mathbb{R}}
\def\N{\mathbb{N}}
\def\Z{\mathbb{Z}}
\def\Co{\mathbb{C}}
\def\c{\mathfrak{c}}
\def\s{\mathfrak{s}}
\newtheorem{theorem}{Theorem}
\newtheorem{corollary}[theorem]{Corollary}
\newtheorem{proposition}[theorem]{Proposition}
\newtheorem{lemma}[theorem]{Lemma}
\title{Estimates for certain integrals of products of six Bessel functions.}
\author{Diogo Oliveira e Silva and Christoph Thiele}
\address{Hausdorff Center for Mathematics, Universit\"{a}t Bonn, 53115 Bonn, Germany.}
\email{dosilva@math.uni-bonn.de}
\email{thiele@math.uni-bonn.de}
\date{\today}                                           
\subjclass[2010]{33C10, 42B10, 65D30.}
\keywords{Bessel functions, Fourier restriction, asymptotic analysis, Newton-Coates quadrature.}
\begin{document}
\begin{abstract}

We establish good numerical estimates for a certain class of integrals involving  
sixfold products of Bessel functions. We use relatively elementary methods.
The estimates will be used in the study of a sharp Fourier restriction inequality on the circle in \cite{CFOST}. 
\end{abstract}

\maketitle

\section{Introduction}

Let $(\mathbb{S}^1,\sigma)$ denote the unit circle in the plane equipped with its arc length measure. The companion paper \cite{CFOST} discusses partial progress towards understanding the optimal constant ${\bf C_{{\rm opt}}}$ in the endpoint Tomas-Stein adjoint restriction inequality \cite{T} on the circle:
\begin{equation}\label{Intro_TomasStein}
\|\widehat{f\sigma}\|_{L^6(\R^2)}\leq {\bf C_{{\rm opt}}} \,\|f\|_{L^2(\mathbb{S}^1)},
\end{equation}
where the Fourier transform of the measure $f\sigma$ is given by 
\begin{equation}\label{fsigmahat}
\widehat{f\sigma}(x)=\int_{\mathbb{S}^1} f(\omega)e^{-i x\cdot\omega} d\sigma_\omega,\;\;\;(x\in\R^2).
\end{equation}
It is conjectured that equality is attained in \eqref{Intro_TomasStein} 
when $f$ is a constant function.
For the constant function $f=\bf{1}$, the sixth power of the left-hand side of inequality \eqref{Intro_TomasStein} turns into the integral

\begin{equation}\label{alpha0}
(2\pi)^{7} \int_0^\infty J_0^6(r) rdr,
\end{equation}
where the Bessel function of order $n$, denoted $J_n$, is defined via the identity
\begin{equation}\label{definebessel}
\widehat{e^{in\cdot}\sigma}(x)= 2\pi (-i)^n J_n(|x|) e^{in\arg(x)}.
\end{equation}
Part of the analysis in \cite{CFOST} consists of a Fourier expansion 
of $f$ on the circle, and one needs estimates 
with rather precise numerical error bounds for  integrals  of the following two types:

\begin{equation}\label{firstintegral}
I_0=I_{0,m,n}:=\int_0^\infty J_{n+m}(r) J_n(r) J_m (r) J_0^3(r)rdr
\end{equation}
and
\begin{equation}\label{secondintegral}
I_1=I_{1,m,n}:=\int_0^\infty J_{n+m}(r) J_n(r) J_m (r) J_1^2(r)J_0(r)rdr.
\end{equation}
 The purpose of the present paper is to establish these estimates, summarized in the following theorem:

\begin{theorem}\label{thm}

Let $n\ge 2$ be some integer. Then each of the following
quantities is less than $0.002 n^{-4}$:

\begin{itemize}
\item[$(i)$] For $n\ge 7$:
$$
\left|I_{0,0,n}
-\frac{3}{4\pi^2}\frac{1}{n}+\frac{3}{32 \pi^2}\frac{1}{(n-1)n(n+1)}\right|,
$$
and for $n\ge 3$:
$$
\left|I_{1,0,n}-
\frac{1}{4\pi^2}\frac{1}{n}-\frac{3}{32\pi^2}\frac{1}{(n-1)n(n+1)}\right|.
$$
\item[$(ii)$] For any $n\ge 2:$
$$
\left|I_{0,2,n}-\frac{15}{64\pi^2}\frac{1}{n(n+1)(n+2)}\right|,
$$
$$
\left|I_{1,2,n}-\frac{9}{64\pi^2}\frac{1}{n(n+1)(n+2)}\right|.
$$
\end{itemize}

\noindent Moreover, each of the following
quantities is less than $0.0015 n^{-4}$:

\begin{itemize}
\item[$(iii)$] For $n\ge 4$:
$$
\left|I_{0,4,n}-\frac{1557}{1024\pi^2}\frac{1}{n(n+1)(n+2)(n+3)(n+4)}\right|,
$$
$$
\left|I_{1,4,n}-\frac{855}{1024\pi^2}\frac{1}{n(n+1)(n+2)(n+3)(n+4)}\right|.
$$
\item[$(iv)$] For even $m\geq 6$ and $n\ge m$:
$
\left|I_{0,m,n}\right|$ and $\left|I_{1,m,n}\right|$.
\end{itemize}
\end{theorem}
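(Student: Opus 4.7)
I would decompose each integral at a threshold $R$ chosen proportional to $n+m$, separating the head $[0,R]$ from the oscillatory tail $[R,\infty)$, and handle the two pieces with different tools. The tail range is where $J_{n+m}$ is past its turning point and the Hankel asymptotic
$$
J_k(r) = \sqrt{\tfrac{2}{\pi r}}\bigl[P_k(r)\cos\phi_k(r) - Q_k(r)\sin\phi_k(r)\bigr], \qquad \phi_k(r) = r - \tfrac{k\pi}{2} - \tfrac{\pi}{4},
$$
becomes effective, with $P_k$, $Q_k$ rational in $r$ whose coefficients are polynomial in $k$. Substituting this into each factor of the sixfold product and expanding via product-to-sum identities produces a finite sum of terms of the form $c\, r^{-j}\cos(sr + \alpha)$, $j \geq 2$, where $s = \sum_i \epsilon_i$ for some $\epsilon \in \{\pm 1\}^6$. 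The non-oscillatory terms ($s = 0$) integrate over $[R,\infty)$ to explicit rational expressions in $R$ and $n$; summing these (with attention to the combinatorics of sign patterns and the phase angles $\alpha = -\tfrac{\pi}{2}(\epsilon_1(n+m) + \epsilon_2 n + \epsilon_3 m)$, plus the cross-terms from $P_k$, $Q_k$) yields the stated main terms, with coefficients such as $3/(4\pi^2)$, $15/(64\pi^2)$, $1557/(1024\pi^2)$ emerging from this trigonometric bookkeeping. Oscillatory terms ($s \neq 0$) lose a factor of $R^{-1}$ per integration by parts, so a few iterations bring them safely below the $n^{-4}$ error target.

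For the head $r \in [0,R]$, I would exploit the sub-turning-point decay of $J_{n+m}(r)$ (via standard bounds such as $|J_k(r)| \leq (r/2)^k/k!$) combined with the trivial bound $|J_0|, |J_1| \leq 1$ to show this piece is very small. To extract the \emph{sharp} numerical constants $0.002$ and $0.0015$ rather than an asymptotic $O(n^{-4})$ bound, I expect a Newton--Cotes quadrature rule on a compact sub-interval to be necessary, its error estimated explicitly via a bound on a derivative of the integrand.

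The main obstacle will be bookkeeping and the intermediate range. The sixfold Hankel expansion produces many dozens of cross-terms, each with a coefficient that must be tracked explicitly, and the threshold $R$ must be chosen carefully to balance the head and tail errors uniformly in $n$. A secondary difficulty is that the Hankel expansion in its standard form only becomes truly effective for $r$ quite a bit larger than $n+m$; on the intermediate annulus one may need a uniform (Airy-type) asymptotic or else an integration by parts applied to the integral representation of $J_{n+m}$. For part (iv), where $m \geq 6$ and no main term survives, the claim should reduce to verifying that no sign pattern $\epsilon \in \{\pm 1\}^6$ with $\sum \epsilon_i = 0$ produces a resonance $(\epsilon_1 + \epsilon_2)n + (\epsilon_1 + \epsilon_3)m \equiv 0 \pmod{4}$ at the relevant scale, a cancellation that forces the entire integral below the uniform threshold $0.0015 \, n^{-4}$.
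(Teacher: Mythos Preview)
Your head/tail decomposition at $R\sim n+m$ will not reach the required $n^{-4}$ precision, and the gap is not merely bookkeeping. As the paper itself stresses, the asymptotic region $r\gg n$ contributes to the integral at the level $n^{-1}$ or $n^{-2}$, and the transition region $r\in[n,n^2]$ (where the Hankel expansion for $J_n$ and $J_{n+m}$ is not yet accurate) contributes a nearly cancelling amount. If you take $R$ small enough that the power-series bound $|J_{n+m}(r)|\le (r/2)^{n+m}/(n+m)!$ controls the head, then $R<2(n+m)$ and the Hankel expansion for $J_{n+m}$ on the tail has relative error of order $(n+m)^2/R\gtrsim n$, destroying any precision. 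If instead you take $R\sim n^2$ so that Hankel is reliable, then the head $[0,R]$ carries almost the entire integral and must be \emph{computed}, not bounded; your proposed tools (small-$r$ bounds, Airy asymptotics) do not give a closed form for it. The cancellation that brings the final answer down to $O(n^{-4})$ lives precisely across this split, so a decomposition that treats the two sides independently cannot see it.

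The paper avoids this by expanding only the four low-order factors $J_0,J_1,J_m$ and leaving $J_n,J_{n+m}$ \emph{unexpanded}. The sixfold integral over $[0,\infty)$ then reduces to a finite sum of core integrals $\int_0^\infty J_n(r)J_{n+m}(r)\,r^{-k}\,e^{i\ell r}\,dr$ with $\ell\in\{0,\pm2,\pm4\}$. These are handled exactly or with exponentially small error: the $\ell=0$ case by Kapteyn's identity and the Weber--Schafheitlin formula, the $\ell=\pm2$ case vanishes by a Fourier-support argument (the convolution $B_n^{(-k)}*B_m$ is supported in $[-2,2]$), and the $\ell=\pm4$ case is $O(4^{-2n})$ by a contour shift. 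This captures the delicate cancellation automatically and yields the main terms with explicit rational coefficients; numerical quadrature is used only for the finitely many cases $n<20$. Your remark that part (iv) reduces to a non-resonance check on sign patterns is also off: the absence of a main term for $m\ge 6$ comes from the vanishing of the Weber--Schafheitlin integral for odd $k<m$, via the zeros of $1/\Gamma$ at nonpositive integers.
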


Thus Theorem \ref{thm} controls integrals of the two types
$I_0$ and $I_1$
for $n\ge 2$ and even $0\le m\le n$, with the exception of
the five cases $m=0$ and $n=2,3,4,5,6$ for $I_0$, and the two
cases $m=0$ and $n=2,3$ for $I_1$. It follows from Table
\ref{table:bigtable} below that, in these exceptional cases, the quantities
are still less than $0.01n^{-4}$, which provides information
about $I_0$ and $I_1$ with at least two percent relative accuracy.

It follows from the methods of this paper, or alternatively
from general principles, that such a result holds with bounds
$cn^{-4}$ for any positive number $c$ in place of $0.002$ or $0.0015$, 
and with some finite set of exceptions. The point of Theorem \ref{thm} is
to  narrow down these exceptions precisely for the specific numbers $c=0.002$ (for $m=0,2$) and $c=0.0015$ (for $m\geq 4$).
Slightly better numerical estimates  are listed in Sections 
\ref{sec:puttingtogether} and \ref{sec:numerical} for the various cases,
but for simplicity we do not reproduce all of them here.

Our methods apply to obtain a more general set of estimates than the ones listed
in Theorem \ref{thm}, but  we focus  on the stated estimates which are
needed in \cite{CFOST}.
There exists a very satisfactory theory of similar integrals of products
of two Bessel functions, see for example Lemmata \ref{kapteynlemma} and \ref{WeberSchafheitlin} below, and a still explicit but 
substantially more complicated theory for integrals of products of four Bessel 
functions. 
While  integrals of sixfold products of Bessel functions still fall into the class of functions for which explicit symbols have been introduced in the theory 
of hypergeometric functions and their generalizations, we  do not know how to 
obtain our rather accurate numerical bounds in a much easier way than by the elementary
but somewhat laborious approach presented in this paper.

Our approach is to expand four of the six Bessel function factors, namely
those four with the lowest orders, into their asymptotic expansions. This will reduce the integrals in question to core integrals
of the type

\begin{equation}\label{mainintegrals}
\int_0^\infty J_n(r) J_{n+m}(r) \sin(\ell r)r^{-k}dr,\ \int_0^\infty J_n(r) J_{n+m}(r) \cos(\ell r)r^{-k}dr
\end{equation}
for $\ell=0,\pm2,\pm 4$. For these integrals, one has good information as in
Lemmata \ref{2rlemma}, \ref{WeberSchafheitlin}, \ref{L2}.
In more detail, the paper is organized as follows. In Section \ref{sec:Background}, we review the theory of Bessel functions inasmuch as it is useful for our purposes. In particular, we establish the aforementioned lemmata, together with asymptotic expansions with precise control on the error terms. 
In Section \ref{sec:Useful}, we prove some useful estimates for binomial coefficients, the Gamma function, and the coefficients that arise in the various asymptotic expansions. The analytic part of the proof of Theorem \ref{thm} begins in Section \ref{sec:Part1}, where we asymptotically expand the functions $J_0$ and $J_1$. Section \ref{sec:Part2} accomplishes the same for the function of next lowest order, namely $J_m$. Finally, Section \ref{sec:Part3} is devoted to the analysis of the core integrals. The estimates from Sections \ref{sec:Part1}$-$\ref{sec:Part3} are then assembled together in Section \ref{sec:puttingtogether}. The approach works for $n\ge 20$, and so for $n<20$ we numerically estimate
the integrals; this is the content of the final Section \ref{sec:numerical}.

We close this discussion with a brief illustration of the difficulty involved.
Figure \ref{fig:sixbessel} depicts the plot of the integrand of $I_{1,6,9}$
between $r=0$ and $r=100$.
One observes an initial region until about $r=n=9$ where the function is very small. Then one sees a region with fairly erratic behaviour until about $r=n^2=81$. Past $r=81$,
one sees a more repetitive behaviour where one has good asymptotic control. 
The asymptotic region yields a positive contribution to the desired integral, 
which is in general of the order $n^{-2}$.
The erratic region yields a negative contribution which nearly cancels the
positive part from the asymptotic region. In question is a very good numerical
control of the order $n^{-4}$ of the small difference. The main tools to capture this cancellation are the algebraic identities from Lemma \ref{2rlemma} and an exact orthogonality formula due to Kapteyn \cite{K} which can be found in  Lemma \ref{kapteynlemma}.

\begin{figure}[htb]
  \centering
  \includegraphics[height=9cm]{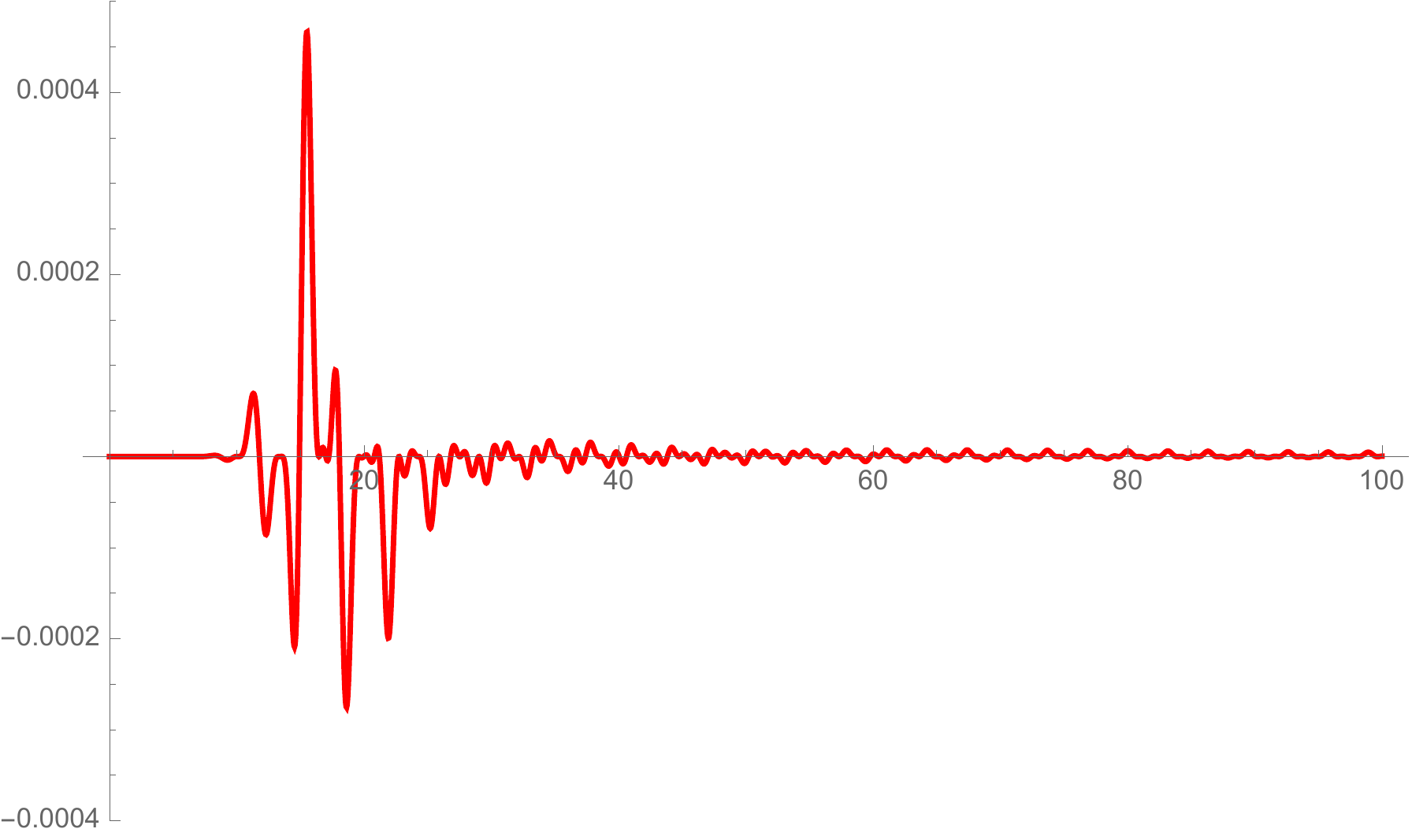}
  \caption{Plot of the function $J_{15}(r)J_{9}(r)J_{6}(r)J_{1}^2(r)J_0(r)r$ for $0\leq r\leq 100$.}
\label{fig:sixbessel}
\end{figure}

\vspace{.5cm}

\noindent {\bf Acknowledgements.} 
The software {\it Mathematica} was heavily used in the brainstorming phase of the research project, as well as in the numerical part of the paper. We are thankful to Emanuel Carneiro for helpful discussions during the preparation of this work, and to Pavel Zorin-Kranich for pointing out an improvement to the first version of our {\it \mbox{Mathematica}} code. Finally, we would like to thank both the  
Hausdorff Center for Mathematics and the Hausdorff Institute for Mathematics for support.


\section{Background on Bessel functions}\label{sec:Background}

We rewrite the definition \eqref{definebessel} of the Bessel function in the 
form of a Bessel integral which  is the starting point in \cite[p.~338]{S}. For
$n\in \Z$ and $z\geq 0$, we claim that

\begin{equation}\label{steinbesselintegral}J_n(z)=\frac{1}{2\pi}\int_{-\pi}^{\pi} e^{i z \sin\theta} e^{-i n \theta} d\theta.\end{equation}
More precisely, replacing $\theta=\omega+\pi/2$ in \eqref{steinbesselintegral}
and using even symmetry of the cosine we obtain for the right-hand side of \eqref{steinbesselintegral}:

\begin{equation}\label{cosinebesselintegral}
\frac{(-i)^n}{2\pi}\int_{-\pi}^{\pi} e^{i z \cos\omega} e^{-i n \omega} d\omega=\frac{(-i)^n}{\pi}\int_{0}^{\pi} e^{i z \cos\omega} \cos(n \omega) d\omega,
\end{equation}
from which the equivalence of \eqref{steinbesselintegral} and \eqref{definebessel} is evident.

The Bessel function, defined via \eqref{steinbesselintegral} for general $z\in\Co$, is an entire function. From \eqref{steinbesselintegral} we obtain the estimate  
\begin{equation}\label{EstJnIm}
|J_n(z)|\leq e^{|\Im (z)|}.
\end{equation}
Differentiation under the integral sign in \eqref{steinbesselintegral}
and  integration by parts yield the following recurrence relations
in the sense of meromorphic functions:
\begin{align}
J_{n-1}(z)-J_{n+1}(z)&=2J_n'(z),\label{Recurrence0}\\
J_{n-1}(z)+J_{n+1}(z)&=\frac{2n}{z} J_n(z).\label{Recurrence1}
\end{align}

A different representation of the Bessel function is the Poisson integral,
which contains a power of a trigonometric function rather than a power of an 
exponential function:
\begin{equation}\label{poissonintegral}{J}_n(z)=\frac{(z/2)^n}{\Gamma(n+1/2)\Gamma(1/2)} 
\int_0^\pi \cos(z \cos(\theta))\sin^{2n}(\theta)\, d\theta\ .\end{equation}
Here we use the Gamma function 
$$\Gamma(s):=\int_0^\infty e^{-t}t^{s-1}dt\ ,$$
which satisfies the functional equation $s\Gamma(s)=\Gamma(s+1)$
and thus meromorphically extends the factorial, that is $\Gamma(n+1)=n!$ for
natural numbers $n$. We mainly need the Gamma function for half-integer 
values, which can be expressed as
\begin{equation}\label{HalfIntegerGamma}
\Gamma\Big(\frac{1}{2}+n\Big)=\frac{(2n)!}{4^n n!}\sqrt{\pi}
\textrm{ and }
\Gamma\Big(\frac{1}{2}-n\Big)=\frac{(-4)^n n!}{(2n)!}\sqrt{\pi}.
\end{equation}
This can be recursively verified from $\Gamma(\frac 12)=\sqrt{\pi}$, 
which in turn can be read from the well-known property

\begin{equation}\label{HalfSine}
\sin(\pi x)\Gamma(x)\Gamma(1-x)=\pi. 
\end{equation}
The latter can be seen by verifying periodicity of the left-hand side 
together with growth estimates which force the left-hand side to be constant.

To see equivalence of the Poisson integral representation \eqref{poissonintegral} with \eqref{steinbesselintegral}, one verifies the case
$n=0$ by substitution and then verifies by partial integration 
that the Poisson integral also satisfies the recursion relations
\eqref{Recurrence0} and \eqref{Recurrence1}.  Combining these two second order recurrence relations
into a first order relation between $J_n$ and $J_{n+1}$, equivalence of
the two integral representations follows recursively by 
a uniqueness result for ordinary differential equations, where we use that both
integral representations vanish for $z=0$ and $n>0$.

From the Poisson integral representation one sees the following estimate from \cite[\S 3.31(1), p.~49]{W}, useful for small $z$:
\begin{equation}\label{wholeplaneJn}
|J_n(z)|\leq \frac{|z|^n e^{|\Im{z}|}}{2^n n!}\ ,
\end{equation}
where we have used
$$\frac 1{n!}=\frac 1{\Gamma(n+1/2)\Gamma(1/2)} 
\int_0^\pi \sin^{2n}(\theta)\, d\theta\ ,$$
which one proves by induction on $n$ using integration by parts.

We turn to the core integrals \eqref{mainintegrals}.
The case $\ell=\pm 2$ will be the most pleasant to deal with via the following lemma:

\begin{lemma}\label{2rlemma}
Let $0\le n,m$ have the same parity, and $1\le k\le n+m$. Then if $k$ is even 
$$\int_0^\infty J_n(r)J_m(r) r^{-k} \cos(2r)\, dr=0\ ,$$
and if $k$ is odd
$$\int_0^\infty J_n(r)J_m(r) r^{-k} \sin(2r)\, dr=0\ .$$
\end{lemma}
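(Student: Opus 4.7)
The plan is to combine the two cases into a single complex integral
\[
I_+(k) := \int_0^\infty J_n(r)J_m(r)\,r^{-k}\,e^{2ir}\,dr,
\]
whose real and imaginary parts recover the cosine and sine integrals respectively. Since $J_n J_m r^{-k}$ is real, the Lemma is equivalent to the statement that $I_+(k)$ is purely imaginary (resp.\ purely real) when $k$ is an even (resp.\ odd) integer in $[1,n+m]$. Invoking \eqref{steinbesselintegral} for each of $J_n$ and $J_m$, inserting a damping factor $e^{-\varepsilon r}$ to legitimize Fubini, and performing the $r$-integration via
\[
\int_0^\infty r^{-k}\,e^{iAr}\,dr = i^{1-k}\,\Gamma(1-k)\,A^{k-1}\qquad(A>0,\ 0<\Re k<1),
\]
then letting $\varepsilon\downarrow 0$ by dominated convergence, one obtains for $0<\Re k<1$ the identity
\[
I_+(k) = \frac{i^{1-k}\,\Gamma(1-k)}{(2\pi)^2}\,F(k),\quad F(k) := \iint_{[-\pi,\pi]^2}\!\!(2+\sin\theta_1+\sin\theta_2)^{k-1}\,e^{-in\theta_1-im\theta_2}\,d\theta_1\,d\theta_2,
\]
which continues meromorphically in $k$ to a wider domain.

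Two elementary properties of $F$ then conclude matters. First, the change of variables $(\theta_1,\theta_2)\mapsto(\pi-\theta_1,\pi-\theta_2)$ preserves $\sin\theta_j$ while multiplying $e^{-in\theta_1-im\theta_2}$ by $(-1)^{n+m}=1$, the last equality using the hypothesis that $n,m$ share parity; hence $F$ is real-valued on the real axis. Second, for each integer $k_0\in[1,n+m]$ one has $F(k_0)=0$: the integrand is then a polynomial in $\sin\theta_1,\sin\theta_2$ of total degree $k_0-1<n+m$, and any monomial $\sin^a\theta_1\sin^b\theta_2$ contributes a nonzero Fourier coefficient at frequency $(n,m)$ only when $a\ge n$ and $b\ge m$ (with matching parities), which would force $a+b\ge n+m$, impossible within the available degree.

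At each such $k_0$ the simple pole of $\Gamma(1-k)$ is cancelled by the (at least simple) zero of $F$, and $\lim_{k\to k_0}\Gamma(1-k)F(k)$ is a real number. Consequently $I_+(k_0)$ equals $i^{1-k_0}$ times a real number, which is purely imaginary for $k_0$ even and purely real for $k_0$ odd; taking the appropriate part recovers the Lemma. The most delicate point is the opening reduction: justifying the $\varepsilon$-regularized Fubini and the subsequent analytic continuation across the integer values of $k$. Once the identity for $I_+(k)$ is in hand, the realness of $F$ via the reflection symmetry and its vanishing via the polynomial-degree count are essentially formal.
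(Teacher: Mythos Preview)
Your approach is genuinely different from the paper's and, once a small slip is repaired, it works. The paper extends the integral to the whole real line by parity, identifies $J_n$ as the Fourier transform of a function $B_n$ supported on $[-1,1]$, observes that $J_n(r)r^{-k}$ (for $k\le n$) is still the transform of a function supported on $[-1,1]$ thanks to the vanishing of the first $k$ moments of $B_n$, and then notes that the convolution $B_n^{(-k)}*B_m$ is continuous and supported in $[-2,2]$, hence vanishes at the endpoint $\xi=2$. Your route instead produces an explicit Mellin-type formula $I_+(k)=\frac{i^{1-k}\Gamma(1-k)}{(2\pi)^2}F(k)$ on the strip $0<\Re k<1$, continues it analytically (using that $I_+$ is absolutely convergent, hence analytic, on $0<\Re k<n+m+1$), and reads off the parity of $I_+(k_0)$ from the realness of $\Gamma(1-k)F(k)$ on the real axis. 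The paper's argument is shorter and more geometric; yours yields a closed formula that is meaningful for non-integer $k$ as well.

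The slip is in your realness argument for $F$. The substitution $\theta_j\mapsto\pi-\theta_j$ sends $e^{-in\theta_1-im\theta_2}$ to $(-1)^{n+m}e^{+in\theta_1+im\theta_2}$, i.e.\ to $(-1)^{n+m}$ times the \emph{complex conjugate} of the original exponential, not to $(-1)^{n+m}$ times the exponential itself. As literally written, your change of variables would only give the tautology $F(k)=F(k)$. The fix is immediate: for real $k$ the factor $(2+\sin\theta_1+\sin\theta_2)^{k-1}$ is real, so the substitution yields $F(k)=(-1)^{n+m}\overline{F(k)}=\overline{F(k)}$, and realness follows. Your degree count showing $F(k_0)=0$ is correct (the $(n,m)$-Fourier coefficient of $\sin^a\theta_1\sin^b\theta_2$ vanishes unless $a\ge n$ and $b\ge m$), so the simple pole of $\Gamma(1-k)$ is indeed cancelled; the limit $\lim_{k\to k_0}\Gamma(1-k)F(k)$ taken along the real axis is then a limit of real numbers, and the remainder of your argument goes through.
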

\begin{proof}
By the parity assumption, we may extend the integrals to the full real line. It then suffices to show that the Fourier transform
$$\int_{-\infty}^\infty J_n(r)J_m(r) r^{-k} e^{-i \xi  r} \, dr$$
vanishes at $\xi=2$. 
Substituting $\xi=\cos\omega$ on the right-hand side of \eqref{cosinebesselintegral} yields
$$J_n(z)=\frac{(-i)^n}{\pi}\int_{-1}^{1} e^{i z \xi} \frac{\cos(n \arccos \xi)}{\sqrt{1-\xi^2}} d\xi\ .$$ 
Hence we see that $J_n$ is the Fourier transform of the function
$$B_n(\xi)=  \frac{(-i)^n}{\pi} T_n(\xi)(1-\xi^2)^{-1/2}1_{[-1,1]}(\xi)\ ,$$ where $T_n$ denotes the Chebysheff polynomial
$T_n(\xi):=\cos(n\arccos \xi)$.

We first consider the case $k\le n$ in the lemma. Then the Fourier
transform $B_n^{(-k)}$ of $J_n(r)r^{-k}$ is still supported on $[-1,1]$ since 
$B_n$ has vanishing moments of orders $0,1,\ldots,k-1$. This can be deduced from \eqref{poissonintegral}. Seen as the convolution of an $L^{p'}$ function with an $L^p$ function for $p=2^-$, the function
$B_n^{(-k)}*B_m$  is  continuous.
 Since it is also
supported on the interval $[-2,2]$,  it must vanish at $\xi=2$. This proves the lemma in case $k\le  n$.
If $n< k\le  n+m$, we distribute some powers of $r$ over $J_m$ and argue similarly.
\end{proof}

The understanding of the dominant case $\ell=0$ of the core integrals 
\eqref{mainintegrals} begins with Kapteyn's identity, proved in a delightful two page paper
\cite{K}.
\begin{lemma}[\cite{K}]\label{kapteynlemma}
If $n,m\geq 0$ and $n+m\neq 0$, then
\begin{equation}\label{kapteynidentity}
\int_0^\infty J_n(r)J_m(r) r^{-1} dr
=\frac 2\pi \frac{\sin \frac{m-n}2 \pi}{m^2-n^2}
\end{equation}
with the following natural interpretation in case $n=m$:
$$\int_0^\infty J_n(r)^2 r^{-1} dr=\frac 1 {2n}\ .$$
\end{lemma}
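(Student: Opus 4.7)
The plan is to reduce the identity \eqref{kapteynidentity} to a product of two elementary one-variable integrals via Neumann's classical product formula
\[
J_n(r)J_m(r) = \frac{2}{\pi}\int_0^{\pi/2}J_{n+m}(2r\cos\theta)\cos((n-m)\theta)\,d\theta.
\]
I would first derive this formula starting from \eqref{steinbesselintegral}: substitute the Bessel integral for $J_{n+m}(2r\cos\theta)$, apply the product-to-sum identity to the resulting double integral in $(\theta,\psi)\in[0,\pi/2]\times[0,\pi]$, and perform the two substitutions $(\alpha,\beta) = (\psi+\theta,\psi-\theta)$ and $(\psi-\theta,\psi+\theta)$ under which $\sin\alpha+\sin\beta = 2\sin\psi\cos\theta$. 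The two halves reassemble into a single integral over the diamond $\{(\alpha,\beta):0\le\alpha+\beta\le 2\pi,\,|\alpha-\beta|\le\pi\}$, on which the Jacobi--Anger-type expansion
\[
\cos(n\alpha+m\beta-r(\sin\alpha+\sin\beta)) = \sum_{j,k\in\Z}J_j(r)J_k(r)\cos((n-j)\alpha+(m-k)\beta)
\]
has, by direct computation in the $(u,v) = ((\alpha+\beta)/2,(\alpha-\beta)/2)$ coordinates, only the $(j,k)=(n,m)$ Fourier mode contributing, which integrates to $2\pi^2 J_n(r)J_m(r)$.

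Substituting Neumann's formula into the left-hand side of \eqref{kapteynidentity} and applying Fubini, the change of variables $s = 2r\cos\theta$ makes the inner integral independent of $\theta$:
\[
\int_0^\infty J_n(r)J_m(r)\frac{dr}{r} = \frac{2}{\pi}\left(\int_0^{\pi/2}\cos((n-m)\theta)\,d\theta\right)\left(\int_0^\infty \frac{J_{n+m}(s)}{s}\,ds\right).
\]
The first bracket equals $\sin((n-m)\pi/2)/(n-m)$ for $n\neq m$ and $\pi/2$ for $n=m$. For the second, I would use \eqref{Recurrence1} in the form $J_p(s)/s = (J_{p-1}(s)+J_{p+1}(s))/(2p)$ and reduce to the auxiliary identity $\int_0^\infty J_q(s)\,ds = 1$ for every $q\ge 0$: the case $q=0$ follows from the representation $\int_0^R J_0\,ds = \pi^{-1}\int_0^\pi\sin(R\cos\omega)\sec\omega\,d\omega\to 1$ via the standard Dirichlet-type limit $\int_{-1}^1 f(x)\sin(Rx)/x\,dx\to\pi f(0)$ applied to $f(x) = (1-x^2)^{-1/2}$; the case $q=1$ uses $J_1=-J_0'$ to get $\int_0^R J_1\,ds = 1-J_0(R)\to 1$; and the general case follows by induction from $\int_0^R(J_{q-1}-J_{q+1})\,ds = 2(J_q(R)-J_q(0))\to 0$ for $q\ge 1$, a consequence of \eqref{Recurrence0}. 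Multiplying the three factors produces $2\sin((m-n)\pi/2)/(\pi(m^2-n^2))$ for $n\neq m$ and, after the elementary $\pi/2$ replacement, $1/(2n)$ in the degenerate case $n=m$.

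The principal technical point is the justification of the Fubini interchange, because $J_{n+m}(2r\cos\theta)/r$ is only conditionally integrable in $r$ and the change of variables $s = 2r\cos\theta$ degenerates at $\theta = \pi/2$. Splitting the $(r,\theta)$-region along the curve $r\cos\theta = 1$ and combining the small-argument bound \eqref{wholeplaneJn}, which yields $|J_{n+m}(2r\cos\theta)|\lesssim (r\cos\theta)^{n+m}$ on $\{r\cos\theta\le 1\}$, with the standard large-argument decay $|J_{n+m}(x)|\lesssim x^{-1/2}$ on the complementary set gives an absolutely integrable majorant whenever $n+m\ge 1$, which is precisely the standing hypothesis of the lemma.
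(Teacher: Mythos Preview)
The paper does not give its own proof of this lemma; it simply cites Kapteyn's original two-page paper \cite{K} and moves on. So there is nothing in the paper to compare against, and your argument has to be judged on its own.

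Your approach via Neumann's product formula is classical and the computation downstream is correct: once the formula
\[
J_n(r)J_m(r)=\frac{2}{\pi}\int_0^{\pi/2}J_{n+m}(2r\cos\theta)\cos((n-m)\theta)\,d\theta
\]
is in hand, the substitution $s=2r\cos\theta$ factorises the double integral, and the two one-variable pieces are evaluated exactly as you say. The identity $\int_0^\infty J_q(s)\,ds=1$ for $q\ge 0$ and the recurrence \eqref{Recurrence1} indeed give $\int_0^\infty J_{n+m}(s)s^{-1}\,ds=1/(n+m)$, and the final product matches \eqref{kapteynidentity} in both the generic and the degenerate case.

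Two minor comments. First, your sketch of the derivation of Neumann's formula is rather compressed, and the assertion that only the $(j,k)=(n,m)$ mode survives the diamond integration needs more care, since the diamond is not a full period box in the $(u,v)$ coordinates; since the formula is standard (Watson \cite{W}, \S 5.43), you could simply cite it. Second, your Fubini justification is fine but can be shortened: because $|J_p(s)|\lesssim\min(s^p,s^{-1/2})$ for $p\ge 1$, the one-variable integral $\int_0^\infty |J_{n+m}(s)|s^{-1}\,ds$ is already absolutely convergent, and the change of variables $s=2r\cos\theta$ (with $dr/r=ds/s$) then shows the iterated absolute integral is a finite constant times $\pi/2$, with no degeneration at $\theta=\pi/2$.
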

Note in particular that \eqref{kapteynidentity} vanishes if $m-n$ is a nonzero even integer. Moreover, identities \eqref{HalfIntegerGamma}, \eqref{HalfSine}, and some algebra yield

$$\frac 2\pi \frac{\sin \frac{m-n}2 \pi}{m^2-n^2}
=\frac{2^{-1}\Gamma(\frac{m+n}2)}{\Gamma(\frac{m+n+2}2)\Gamma(\frac{n-m+2}2)\Gamma(\frac{m-n+2}2)}.$$
From Kapteyn's identity, one obtains the following more general result.
\begin{lemma}\label{WeberSchafheitlin} 
If $0\le n, m$ and $1\le k\le n+m$, then
\begin{equation}\label{wsintegral}\int_0^\infty J_n(r)J_{m}(r) r^{-k} dr=
\frac{2^{-k}\Gamma(k) \Gamma(\frac{m+n+1-k}2)}{\Gamma(\frac{m+n+1+k}2)
\Gamma(\frac{n-m+k+1}2)\Gamma(\frac{m-n+k+1}2)}\ .
\end{equation}
\end{lemma}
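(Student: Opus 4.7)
My plan is to prove \eqref{wsintegral} by induction on $k$, using Lemma \ref{kapteynlemma} as the base case and the Bessel recurrence \eqref{Recurrence1} for the inductive step. For $k=1$, the right-hand side of \eqref{wsintegral} simplifies via $\Gamma(x+1)=x\Gamma(x)$ (applied to $\Gamma(\frac{m+n+2}{2})$) and the reflection formula \eqref{HalfSine} (applied at $x=\frac{m-n}{2}$) to the right-hand side of Kapteyn's identity \eqref{kapteynidentity}; this is essentially the algebraic manipulation already displayed just after Lemma \ref{kapteynlemma}, with the degenerate case $m=n$ handled by a standard limit.

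For the inductive step, fix $k\geq 2$ and assume \eqref{wsintegral} for $k-1$. Since $1\leq k\leq n+m$ with $k\geq 2$, not both of $n,m$ can be zero, and by the symmetry of \eqref{wsintegral} under $(n,m)\leftrightarrow (m,n)$ I may assume $n\geq 1$. Rewriting $r^{-1}J_n(r)=(J_{n-1}(r)+J_{n+1}(r))/(2n)$ from \eqref{Recurrence1} inside the integral gives
$$\int_0^\infty J_n(r)J_m(r) r^{-k}\,dr = \frac{1}{2n}\int_0^\infty \bigl(J_{n-1}(r)+J_{n+1}(r)\bigr) J_m(r) r^{-(k-1)}\,dr.$$
Both resulting integrals converge absolutely: near $r=0$ by \eqref{wholeplaneJn}, since their integrands are $O(r^{(n\pm 1)+m-(k-1)})$ with non-negative exponent (from $k\leq n+m$), and near $r=\infty$ by the $O(r^{-1/2})$ decay of Bessel functions. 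By the inductive hypothesis they equal $G(n-1,m,k-1)$ and $G(n+1,m,k-1)$, where $G(n,m,k)$ denotes the right-hand side of \eqref{wsintegral}.

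The main and essentially only remaining obstacle is the purely algebraic verification
$$G(n,m,k)=\frac{1}{2n}\bigl(G(n-1,m,k-1)+G(n+1,m,k-1)\bigr).$$
Using $\Gamma(x+1)=x\Gamma(x)$ to express $G(n\pm 1,m,k-1)$ over the common denominator $\Gamma(\frac{m+n+1+k}{2})\Gamma(\frac{n-m+k+1}{2})\Gamma(\frac{m-n+k+1}{2})$ and pulling out the common factor $2^{-k}\Gamma(k-1)\Gamma(\frac{m+n+1-k}{2})$, the identity reduces to the polynomial claim
$$(m+n+k-1)(n-m+k-1)+(m+n-k+1)(m-n+k-1)=4n(k-1).$$
Each summand factors as a difference of squares, namely $(n+k-1)^2-m^2$ and $m^2-(n-k+1)^2$, so the sum telescopes to $(n+k-1)^2-(n-k+1)^2=4n(k-1)$. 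Combined with $(k-1)\Gamma(k-1)=\Gamma(k)$, this reproduces $G(n,m,k)$ and closes the induction.
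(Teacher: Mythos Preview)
Your proof is correct and follows essentially the same route as the paper: induction on $k$ with Kapteyn's identity as the base case, the recurrence \eqref{Recurrence1} to reduce $\mathcal{I}_{n,m,k}$ to $\frac{1}{2n}(\mathcal{I}_{n-1,m,k-1}+\mathcal{I}_{n+1,m,k-1})$, and then the same Gamma-function algebra. Your difference-of-squares verification of the key polynomial identity is a bit cleaner than the paper's direct expansion, and you are more explicit about convergence and about using the $(n,m)$ symmetry to ensure $n\geq 1$, but these are cosmetic differences rather than a different approach.
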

\begin{proof}
The identity is true for $k=1$ by Lemma \ref{kapteynlemma}.
Let $k\ge 1$ be given and assume that identity \eqref{wsintegral} is true
for this particular $k$. To prove the identity for $k+1$ we denote the integral
in \eqref{wsintegral} by $\mathcal{I}_{n,m,k}$.
Then, by the second recursion in \eqref{Recurrence1} and using the induction hypothesis, we have that
$$\mathcal{I}_{n,m,k+1}=\frac 1{2n} (\mathcal{I}_{n-1,m,k}+\mathcal{I}_{n+1,m,k})$$
$$=\frac{2^{-(k+1)}n^{-1}\Gamma(k) \Gamma(\frac{m+n-k}2)}{ \Gamma(\frac{m+n+k}2)
\Gamma(\frac{n-m+k}2)\Gamma(\frac{m-n+k+2}2)}+
\frac{2^{-(k+1)}n^{-1}\Gamma(k) \Gamma(\frac{m+n+2-k}2)}{\Gamma(\frac{m+n+2+k}2)
\Gamma(\frac{n-m+k+2}2)\Gamma(\frac{m-n+k}2)}=$$
$$\frac{2^{-(k+1)}\Gamma(k+1) \Gamma(\frac{m+n-k}2)}{ \Gamma(\frac{m+n+2+k}2)\Gamma(\frac{n-m+k+2}2)\Gamma(\frac{m-n+k+2}2)}
\frac{(m+n+k)(n-m+k)+(m+n-k)(m-n+k)}{4 n k}$$
The second fraction in the last line is equal to $1$, and this proves the inductive step.
\end{proof}
Note in particular that if $k$ is odd and satisfies $k<|n-m|$, then \eqref{wsintegral} vanishes since the function $1/\Gamma$ has zeros at $s=0,-1,-2,\ldots$
Note also that three of the Gamma factors, the one with argument $k$ in
the numerator and the two involving the difference $m-n$ in the denominator,
typically form a  binomial coefficient, which can be roughly
estimated by $2^k$.
An alternative approach to Lemma \ref{WeberSchafheitlin} is the integration theory of Weber and Schafheitlin as outlined in \cite[\S 13.24, p.~398]{W}.

The case $\ell=\pm 4$ of the core integrals \eqref{mainintegrals} 
gives small error terms which we estimate with the following lemma.
\begin{lemma}\label{L2}
Let $0\le n,m$ 
and $1\le k<n+m$. Then:
$$\Big|\int_0^\infty J_n(r)J_{m}(r) r^{-k} e^{4ir} {dr}\Big|
\leq
\frac{2^{k-1}}{4^{n+m}} \frac{(n+m-k)!}{n!m!}.$$
\end{lemma}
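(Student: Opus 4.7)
\medskip

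\noindent\textbf{Plan of proof.}
The plan is to deform the contour of integration from the positive real axis to the positive imaginary axis, and then bound the resulting real integral with the pointwise estimate \eqref{wholeplaneJn}. The key observation is that the exponential decay of $e^{4iz}$ in the upper half-plane more than compensates for the factor $e^{|\Im(z)|}$ that appears when we estimate each $J_n(z)$ and $J_m(z)$ by \eqref{wholeplaneJn}: for $z=Re^{i\theta}$ with $\theta\in[0,\pi/2]$ we obtain
\begin{equation*}
\bigl|J_n(z)J_m(z)z^{-k}e^{4iz}\bigr|
\leq \frac{R^{n+m-k}}{2^{n+m}n!\,m!}\,e^{2R\sin\theta}\cdot e^{-4R\sin\theta}
=\frac{R^{n+m-k}}{2^{n+m}n!\,m!}\,e^{-2R\sin\theta}.
\end{equation*}

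Next, I would consider the closed quarter-circle contour consisting of $[\varepsilon,R]$, the arc $\{Re^{i\theta}:0\leq\theta\leq\pi/2\}$, the segment $[iR,i\varepsilon]$ along the imaginary axis, and the small arc $\{\varepsilon e^{i\theta}:0\leq\theta\leq\pi/2\}$ near the origin. Since $J_n J_m z^{-k} e^{4iz}$ is holomorphic in the open first quadrant (as $J_n$ and $J_m$ are entire and $n+m-k\geq 1$ ensures boundedness at $z=0$), the contour integral vanishes by Cauchy's theorem. The small arc contributes $O(\varepsilon^{n+m-k})\to 0$ as $\varepsilon\to 0$, while on the large arc the estimate above gives a contribution bounded by
\begin{equation*}
\int_0^{\pi/2} \frac{R^{n+m-k+1}}{2^{n+m}n!\,m!}\,e^{-2R\sin\theta}\,d\theta,
\end{equation*}
which tends to $0$ as $R\to\infty$ by Jordan's lemma (or more directly by $\sin\theta\geq 2\theta/\pi$). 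Hence the integral over the positive real axis equals the integral over the positive imaginary axis, up to orientation.

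Parameterizing $z=it$ for $t>0$, the identity becomes
\begin{equation*}
\int_0^\infty J_n(r)J_m(r)r^{-k}e^{4ir}\,dr
= i^{1-k}\int_0^\infty J_n(it)J_m(it)t^{-k}e^{-4t}\,dt.
\end{equation*}
Applying \eqref{wholeplaneJn} to each of $J_n(it)$ and $J_m(it)$ (so that $e^{2t}$ appears from the two factors $e^{|\Im(it)|}$) yields
\begin{equation*}
\Bigl|\int_0^\infty J_n(r)J_m(r)r^{-k}e^{4ir}\,dr\Bigr|
\leq \frac{1}{2^{n+m}n!\,m!}\int_0^\infty t^{n+m-k}e^{-2t}\,dt
=\frac{(n+m-k)!}{2^{2(n+m)-k+1}n!\,m!},
\end{equation*}
which is exactly $\tfrac{2^{k-1}}{4^{n+m}}\tfrac{(n+m-k)!}{n!\,m!}$.

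The potential obstacle is justifying the contour deformation: convergence at infinity along the quarter-arc is secured by the cancellation of exponentials noted above, while integrability at the origin relies on the hypothesis $k<n+m$, which guarantees $J_n(z)J_m(z)z^{-k}=O(z^{n+m-k})=O(z)$ near $z=0$. Once these two points are verified, the rest is the straightforward Gamma-function evaluation above.
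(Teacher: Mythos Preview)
Your overall strategy---deform the contour to the positive imaginary axis and then apply \eqref{wholeplaneJn}---is exactly the one the paper uses, and your final computation on the imaginary axis is correct. However, there is a genuine gap in your justification that the large-arc contribution vanishes.

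You bound the integrand on the arc $z=Re^{i\theta}$ using \eqref{wholeplaneJn}, obtaining
\[
\int_0^{\pi/2}\frac{R^{n+m-k+1}}{2^{n+m}n!\,m!}\,e^{-2R\sin\theta}\,d\theta,
\]
and assert this tends to $0$ by Jordan's lemma. It does not. Using $\sin\theta\geq 2\theta/\pi$ one gets $\int_0^{\pi/2}e^{-2R\sin\theta}\,d\theta\leq \pi/(4R)$, so your bound is of order $R^{n+m-k}$, and since $k<n+m$ this blows up. The problem is that \eqref{wholeplaneJn} is sharp only for small $|z|$; near the real axis and for large $|z|$ it overestimates $|J_n(z)|$ by the enormous factor $|z|^n/(2^n n!)$. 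The fix is easy: on the large arc use instead the crude bound \eqref{EstJnIm}, $|J_n(z)|\leq e^{|\Im z|}$, which gives an arc contribution bounded by $R^{1-k}\int_0^{\pi/2}e^{-2R\sin\theta}\,d\theta\leq \tfrac{\pi}{4}R^{-k}\to 0$.

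With that correction your quarter-circle argument is complete and, in one respect, slightly cleaner than the paper's: the paper closes the contour with a diagonal segment from $iN$ to $N$ followed by the real ray $[N,\infty)$, and to dispose of the latter it invokes the nontrivial fact that $J_n\in L^4(\mathbb{R})$ (as the Fourier transform of an $L^{4/3}$ function). Your arc avoids that appeal entirely.
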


\begin{proof}

We estimate this integral by the descent method, 
changing the contour of integration to the contour consisting
of a line segment from $0$ to $iN$ for some large $N$, then a line segment 
from $iN$ to $N$, and then a ray from $N$ to $\infty$ along the real axis.
Only the first integral provides a substantial contribution, since the next two 
segments give contributions that tend to $0$ as $N$ tends to infinity.

Along the first segment of the contour the integral can be estimated using \eqref{wholeplaneJn} by

\begin{align*}
\int_0^\infty |J_n(ix)||J_{m}(ix)| x^{-k} e^{-4x} {dx}&
\leq \frac{1}{n!m!}\frac{1}{2^{n+m}} \int_0^\infty  x^{n+m-k}e^{-2x} {dx}\\
&=\frac{2^{k-1}}{4^{n+m}} \frac{(n+m-k)!}{n!m!}.
\end{align*}
The integral over the second part of the contour is estimated using \eqref{EstJnIm} by

$$\sqrt{2} \int_0^N |J_n(x+i(N-x))||J_{m}(x+i(N-x))| |x+i(N-x)|^{-k} e^{-4(N-x)} {dx}$$ $$ \leq 
\sqrt{2} \int_0^N (N/\sqrt{2})^{-k} e^{-2(N-x)} {dx} \le 2^{k+1}N^{-k}\ , $$
which tends to 0 as $N\rightarrow\infty$. 
The integral over the third piece of the contour is estimated using the 
fact that the functions $J_n$ and $J_m$ are in $L^4$ as Fourier transforms 
of $L^{4/3}$ functions and thus the continuous function
$J_n(r)J_m(r)r^{-1}$ is in $L^1$. It then follows from the dominated convergence theorem that

$$\lim_{N\to \infty}\int_N^\infty |J_n(r) J_{n+m}(r)| r^{-1} {dr}=0\ .$$
Adding the contour integrals and letting $N\to \infty$ 
yields the desired bound.
\end{proof}

To arrive at the core integrals, we need asymptotic expansions of the 
Bessel functions near infinity as in the following lemma.
\begin{lemma}\label{AsymptoticBessel}
Let $n\in\N$ 
and $\Re(z)>0$. Let $\omega_n=z-{n\pi}/{2}-{\pi}/{4}$. Let $\ell\in\N$ be such that $\ell\geq \max\{n-1/2,1\}$. If $\ell$ is even, then

\begin{equation}\label{evenPQsum}
J_n(z) =\Big(\frac{2}{\pi z }\Big)^{\frac1 2}\Big[(\cos\omega_n) \left(\sum_{k=0}^{\frac{\ell}2-1} (-1)^k\frac{a_{2k}(n)}{z^{2k}}\right)-(\sin\omega_n)\left(\sum_{k=0}^{\frac{\ell}2-1} (-1)^k\frac{a_{2k+1}(n)}{z^{2k+1}}\right)\Big]+R_{n,\ell}(z).
\end{equation}
If $\ell$ is odd, then

\begin{equation}\label{oddPQsum}
J_n(z) =\Big(\frac{2}{\pi z }\Big)^{\frac1 2}\Big[(\cos\omega_n) \left(\sum_{k=0}^{\frac{\ell-1}{2}} (-1)^k\frac{a_{2k}(n)}{z^{2k}}\right)-(\sin\omega_n)\left(\sum_{k=0}^{\frac{\ell-3}{2}} (-1)^k\frac{a_{2k+1}(n)}{z^{2k+1}}\right)\Big]+{R}_{n,\ell}(z).
\end{equation}
Here the coefficients $a_j(n)$ are defined by

\begin{equation}\label{coeffajn}
a_j(n)=\frac{\Gamma(n+j+\frac 1 2)}{\Gamma(n-j+\frac 12) j! 2^j},
\end{equation}
and  the remainder function ${R}_{n,\ell}$ satisfies the bounds 
$$|R_{n,\ell}(z)|\leq 
\Big(\frac 2{\pi |z|}\Big)^{\frac1 2}
\frac{\Gamma(n+\ell+\frac1 2)}{|\Gamma(n-\ell+\frac1 2)|\ell !}
\left( \frac{|z|}{\Re(z)}\right)^{\ell-n+\frac1 2}\cosh(\Im(z))\left(\frac{1}{2|z|}\right)^\ell.$$
\end{lemma}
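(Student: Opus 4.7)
The plan is to derive the expansion and the remainder bound from a Hankel-type integral representation tailored to large $|z|$. Starting from the Bessel integral \eqref{steinbesselintegral}, one deforms the contour onto the steepest-descent paths through the two saddles of $e^{iz\sin\theta}$ at $\theta=\pm\pi/2$ to arrive at the pair of identities
\begin{equation*}
H^{(\epsilon)}_n(z)=\Big(\frac{2}{\pi z}\Big)^{\!1/2}\frac{e^{i\epsilon\omega_n}}{\Gamma(n+1/2)}\int_0^\infty e^{-s}s^{n-1/2}\Big(1+\frac{i\epsilon s}{2z}\Big)^{n-1/2}ds,\qquad\epsilon\in\{\pm 1\},
\end{equation*}
each valid for $\Re z>0$, together with $J_n(z)=\frac{1}{2}(H^{(1)}_n(z)+H^{(-1)}_n(z))$. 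The factors $|e^{\pm i\omega_n}|=e^{\mp\Im z}$ will combine, after averaging, into the $\cosh(\Im z)$ appearing in the claimed error bound.

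Next I would apply Taylor's theorem with integral remainder to $(1+w)^{n-1/2}$ at $w=0$, truncated at order $\ell-1$, substituting $w=i\epsilon s/(2z)$. Integrating the $k$-th monomial of the polynomial part against $e^{-s}s^{n-1/2}\,ds$ yields
\begin{equation*}
\binom{n-1/2}{k}\Big(\frac{i\epsilon}{2z}\Big)^{\!k}\Gamma(n+k+1/2)=\Gamma(n+1/2)\,\frac{(i\epsilon)^k\,a_k(n)}{z^k},
\end{equation*}
after using $\binom{n-1/2}{k}=\Gamma(n+1/2)/(\Gamma(n-k+1/2)\,k!)$ and the definition \eqref{coeffajn}. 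Averaging over $\epsilon=\pm 1$ and splitting $k$ by parity employs the elementary identities $e^{i\omega_n}i^{2j}+e^{-i\omega_n}(-i)^{2j}=2(-1)^j\cos\omega_n$ and $e^{i\omega_n}i^{2j+1}+e^{-i\omega_n}(-i)^{2j+1}=-2(-1)^j\sin\omega_n$, which assemble exactly into the cosine and sine sums of \eqref{evenPQsum}-\eqref{oddPQsum}; the minus sign in front of the sine sum comes from the second identity.

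For the remainder, I would invoke the Lagrange integral form
\begin{equation*}
\Big|(1+w)^{n-1/2}-\sum_{k=0}^{\ell-1}\binom{n-1/2}{k}w^k\Big|\le \Big|\binom{n-1/2}{\ell}\Big|\,|w|^\ell\,\ell\int_0^1(1-\tau)^{\ell-1}|1+\tau w|^{n-1/2-\ell}\,d\tau,
\end{equation*}
together with the geometric lower bound $|1+w|\ge|\cos\phi|$ for $w=i\epsilon u e^{-i\phi}$ with $u\ge 0$, $\epsilon\in\{\pm 1\}$, and $|\phi|<\pi/2$, which follows by minimising $|1+w|^2=1+2\epsilon u\sin\phi+u^2$ over $u\ge 0$. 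Writing $z=|z|e^{i\phi}$ and $u=\tau s/(2|z|)$ then gives $|1+\tau w|\ge\Re(z)/|z|$, and combined with the hypothesis $\ell\ge\max\{n-1/2,1\}$ (so that the exponent $n-1/2-\ell\le 0$) produces the factor $(|z|/\Re z)^{\ell-n+1/2}$. The remaining $s$-integration yields $\int_0^\infty e^{-s}s^{n+\ell-1/2}\,ds=\Gamma(n+\ell+1/2)$, and the identity $|\binom{n-1/2}{\ell}|=\Gamma(n+1/2)/(|\Gamma(n-\ell+1/2)|\,\ell!)$ assembles the remaining Gamma factors to match the claim. The main obstacle is justifying the initial contour deformation that establishes the two Hankel representations, which requires a standard but slightly tedious steepest-descent argument verifying decay along the deformed paths; once that is in place, the remainder estimate follows from the elementary computation just sketched.
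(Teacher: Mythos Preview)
Your proposal is correct and matches the paper's argument in all essentials: both obtain the Hankel-type representation with kernel $(1+i\epsilon s/(2z))^{n-1/2}$, Taylor-expand it to order $\ell-1$, and bound the remainder via the geometric inequality $|1+i\epsilon s/(2z)|\ge\Re(z)/|z|$ combined with $\ell\ge n-1/2$. The only difference is that the paper derives the Hankel representation from the Poisson integral \eqref{poissonintegral} by an elementary $\Pi$-shaped contour shift in $\int_{-1}^{1} e^{izt}(1-t^2)^{n-1/2}\,dt$, which sidesteps the steepest-descent deformation from \eqref{steinbesselintegral} that you identify as the main obstacle.
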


\begin{proof}[Proof]
We expand on the proof outlined in Watson \cite[\S 7.3, p.~205]{W}.
The change of variables $t=\cos\theta$ turns the Poisson integral \eqref{poissonintegral} into
$${J}_{n}(z)=\frac{(z/2)^{n}}{\Gamma(n+1/2)\Gamma(1/2)} 
\int_{-1}^{1} \cos(z t)(1-t^2)^{n-1/2} \, dt\ .$$
We split 
$$J_n(z)=\frac 12 (J_n^+(z)+J_n^-(z)),$$
with
$${J}_n^+(z)=\frac{(z/2)^n}{\Gamma(n+1/2)\Gamma(1/2)} 
\int_{-1}^{1} e^{iz t}(1-t^2)^{n-1/2} \, dt$$
and $J_n^-(\overline{z})=\overline{J_n^+({z})}$.
Now we change the contour in the integral for $J_{n}^+$ 
towards a $\Pi$-shaped contour consisting
of the line segment from $-1$ to $-1+iN$, followed by the line segment 
from $-1+iN$ to $1+iN$, and then the line segment from $1+iN$ to $1$. 
On that contour as well as on its convex hull, 
we have $\Re(1-t^2)\ge 0$ with equality only at the end points of the
contour. Hence we may choose the continuous branch of the square root 
function on the slit plane $\Co\setminus (-\infty,0)$ which takes positive values on the positive real axis.  
For simplicity of notation, let us introduce the half-integer $\nu:=n-1/2$. The integral over the first segment of the contour becomes

\begin{align*}
\frac{(z/2)^{\nu+1/2}}{\Gamma(\nu+1)\Gamma(1/2)} 
\int_0^{N} e^{-iz}e^{-zs}(2is+s^2)^{\nu} i\, ds
&=\frac{e^{-iz}}
{(2\pi z)^{1/2}\Gamma(\nu+1)} 
\int_0^{zN} e^{-u}u^{\nu}(i+\frac{u}{2z})^{\nu} i\, du\\
&=\frac{e^{i(-z+(\nu+1)\pi/2)}}{(2\pi z)^{1/2}\Gamma(\nu+1)} 
\int_0^{zN} e^{-u}u^{\nu}(1-\frac{iu}{2z})^{\nu}\, du\ ,
\end{align*}
where in the first identity we replaced $zs$ by $u$
and rotated the contour towards the line segments from $0$ to $zN$,
and used $\Gamma(1/2)=\pi^{1/2}$. In the second identity, we 
first pulled an integer power of $i$ out of the integral and
and then pulled half a power of $i$ out of the integral without leaving 
the domain of definition of the chosen branch of the square root function.
If $\Re(z)>0$, then this last integral has a limit as $N\to \infty$.
Similarly, the integral over the third line segment becomes  

$$- \frac{(z/2)^{\nu+1/2}}{\Gamma(\nu+1)\Gamma(1/2)} 
\int_0^{N} e^{iz}e^{-zs}(-2is+s^2)^{\nu} i\, ds
=\frac{e^{i(z-(\nu+1)\pi/2}
}{(2\pi z)^{1/2}\Gamma(\nu+1)} 
\int_0^{zN} e^{-u}u^{\nu}(1+\frac{iu}{2z})^{\nu} \, du\ .$$
If $\Re(z)>0$, then the limit again exists. Moreover,  the
integral over the second line segment tends to $0$ as $N\to \infty$. We therefore obtain
$$J_n^+(z)=\frac{(2\pi z)^{-1/2}}{\Gamma(\nu+1)}
\int_0^\infty e^{-u}u^{\nu}\left[
e^{-i\omega_n}
(1-\frac{iu}{2z})^{\nu}
+e^{i\omega_n}(1+\frac{iu}{2z})^{\nu}\right]\, du\ ,$$
where $\omega_n=z-{n\pi}/{2}-{\pi}/{4}=z-(\nu+1)\pi/2$, and the contour
has been changed to one along the real axis.

The function
$(1+\alpha u)^\nu$ is infinitely differentiable in $u\in [0,\infty)$
for any $\alpha\in \Co\setminus (-\infty,0]$. Taylor's expansion 
gives, for any $\ell\ge 1$,
 
$$ (1+\alpha u )^{\nu} = 
\sum_{k=0}^{\ell-1}\frac{\Gamma(\nu+1)}{\Gamma(\nu+1-k)}
\frac{\left(\alpha u\right)^{k}}{k!}
+ r   \frac{u ^{\ell}}{\ell !},$$
where $r$ is the $\ell$-th derivative of the function $u\mapsto (1+\alpha u)^\nu$ at some point $u_0\in [0,u]$.
If $\ell>\nu$, then this derivative is proportional to a negative power of the function
and thus attains its maximum at the point where the value of the function comes 
closest to the origin. This point equals $u=-\Re(\alpha)/|\alpha|^2$ and the value of
the $\ell$-th derivative there equals 
$$\frac{\Gamma(\nu+1)}{\Gamma(\nu+1-\ell)}
\alpha ^{\ell} \Big(1-\frac {\Re(\alpha)}{\overline{\alpha}}\Big)^{\nu-\ell}\ .$$
Hence we can write for the remainder term
$$r \frac{u ^{\ell}}{\ell !}=
\theta(u) \frac{\Gamma(\nu+1)}{\Gamma(\nu+1-\ell)}
\left|1-\frac { \Re(\alpha)}{\overline{\alpha}}\right|^{\nu-\ell}
\frac{\left(\alpha u \right)^{\ell}}{\ell !}$$
for some $|\theta(u)|\le 1$ that also depends on $\alpha$.
Note that
$$|1-\frac {\Re(\alpha)}{\overline{\alpha}}|=\frac{|\overline{\alpha}-\Re(\alpha)|}{|\alpha|}=
\frac{|\Im(\alpha)|}{|\alpha|},$$
and the latter equals $\Re(z)/|z|$ if $\alpha=\pm i/(2z)=\pm i\overline{z}/(2|z|^2)$.
For $\Re(z)>0$, we have that $1\pm\frac{ iu}{2z}$ is not real, hence we may
insert the Taylor expansion into the integral for $J_n^+$, and obtain for $J_n^+(z)$ the expression

\begin{align*}
&(2\pi z)^{-1/2}
\sum_{k=0}^{\ell -1} \frac{1}{\Gamma(\nu+1-k)k!} 
\int_0^\infty e^{-u}u^{\nu+k}\left[
e^{-i\omega_n} \left(\frac{-i}{2z}\right)^k 
+e^{i\omega_n}\left(\frac{i}{2z}\right)^k \right]\, du+R(z)\\
&=(2\pi z)^{-1/2}
\sum_{k=0}^{\ell -1} \frac{\Gamma(\nu+k+1)}{\Gamma(\nu+1-k)k!} 
\left[
e^{-i\omega_n} \left(\frac{-i}{2z}\right)^k 
+e^{i\omega_n}\left(\frac{i}{2z}\right)^k \right]+R(z)\ ,
\end{align*}
where the remainder term $R=R_{n,\ell}$ satisfies
$$|R_{n,\ell}(z)|\le  
\Big(\frac 2{\pi |z|}\Big)^{1/2}
\frac{\Gamma(\nu+\ell+1)}{|\Gamma(\nu+1-\ell)|\ell !}
\left( \frac{|z|}{\Re(z)}\right)^{\ell-\nu}\cosh(\Im(z))\left(\frac{1}{2|z|}\right)^\ell \ .$$
We split the summation into even integers and odd integers to obtain

\begin{align*}
J_n^+(z)
&=
\Big(\frac{2}{\pi z}\Big)^{1/2}
\sum_{\substack{0\le k<\ell: \\ k\ \rm even}} (-1)^{k/2}\frac{\Gamma(\nu+k+1)}{\Gamma(\nu+1-k)k!2^k} 
\cos(\omega_n) \left(\frac{1}{z}\right)^k\\
&-
\Big(\frac{2}{\pi z}\Big)^{1/2}
\sum_{\substack{0\le k<\ell: \\ k\ \rm odd}} (-1)^{(k-1)/2}\frac{\Gamma(\nu+k+1)}{\Gamma(\nu+1-k)k!2^k} 
\sin(\omega_n) \left(\frac{1}{z}\right)^k + R_{n,\ell}(z).
\end{align*}
This completes the proof of the lemma. 
 \end{proof}

We specify our findings into  explicit first order asymptotics 
valid for sufficiently large $z$ near the positive real axis.
We have with the notation from the above proof:

\begin{corollary}\label{zlargernsquare}
Let $z\in\Co$ be such that $\Im(z)<\Re(z)$. Then 
$$\left|J_{0}^\pm(z)- \left(\frac{2}{\pi z}\right)^{1/2}\cos(\omega_0)\right|\le 
\frac 1{8|z|}  \left(\frac{2}{\pi |z|}\right)^{1/2}\cosh(\Im (z))
\left(\frac{|z|}{|\Re(z)|}\right)^{3/2}\ ,
 $$
$$\left|J_{1}^\pm(z)- \left(\frac{2}{\pi z}\right)^{1/2}\cos(\omega_1)\right|\le 
\frac 3{8|z|}  \left(\frac{2}{\pi |z|}\right)^{1/2}\cosh(\Im (z))
\left(\frac{|z|}{|\Re(z)|}\right)^{1/2}\ ,
 $$
and if $n>1$ and $\Re(z)>n^2$, then
$$\left|J_{n}^\pm(z)- \left(\frac{2}{\pi z}\right)^{1/2}\cos(\omega_n)
\right|\le 
\left(\frac{2}{\pi |z|}\right)^{1/2} 
\frac{n^2}{|z|}\cosh(\Im (z)) 
\left(\frac{|z|}{|\Re(z)|}\right)^{1/2}\ .$$

\end{corollary}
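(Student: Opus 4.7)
The plan is to apply Lemma \ref{AsymptoticBessel} with a truncation order $\ell$ chosen to isolate the leading cosine term $(2/\pi z)^{1/2}\cos(\omega_n)$, handling the three cases separately.

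For $n\in\{0,1\}$, the constraint $\ell\geq\max\{n-1/2,1\}$ permits $\ell=1$, and the expansion \eqref{oddPQsum} then collapses to $J_n^\pm(z)=(2/\pi z)^{1/2}\cos(\omega_n)+R_{n,1}(z)$, since $a_0(n)=1$ and the sine sum is empty. The claimed inequality is then exactly the remainder estimate from the Lemma: evaluating $\Gamma(n+3/2)/|\Gamma(n-1/2)|$ via \eqref{HalfIntegerGamma} gives $1/4$ for $n=0$ (hence the constant $1/8$ after multiplying by $1/(2|z|)$) and $3/4$ for $n=1$ (hence $3/8$), with the respective exponents $3/2$ and $1/2$ on $(|z|/\Re z)$ coming from $\ell-n+1/2$. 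The Lemma's proof derives this expansion explicitly for $J_n^+$, and $J_n^-$ is handled by complex conjugation.

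For $n>1$ I would take $\ell=n$, the smallest integer with $\ell\geq n-1/2$. This choice produces the correct factor $(|z|/\Re z)^{1/2}$ in the remainder, but also brings in extra asymptotic terms indexed by $k\geq 1$. Using $|\cos\omega_n|,|\sin\omega_n|\leq\cosh(\Im z)$, the magnitude of these extra terms is at most $(2/\pi|z|)^{1/2}\cosh(\Im z)\sum_{k=1}^{n-1}a_k(n)/|z|^k$. The key observation is the ratio identity $a_{k+1}(n)/a_k(n)=(n+k+1/2)(n-k-1/2)/(2(k+1))=(n^2-(k+1/2)^2)/(2(k+1))\leq n^2/(2(k+1))$, which combined with the hypothesis $\Re(z)>n^2$ yields $a_k(n)/|z|^k\leq (n^2/(2|z|))^k/k!$ by induction. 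Summing gives the bound $e^{n^2/(2|z|)}-1$, which is strictly less than $n^2/|z|$ since $e^t-1<2t$ on $[0,1/2]$.

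The remainder $R_{n,n}(z)$ is negligible: its coefficient $\Gamma(2n+1/2)/(\sqrt{\pi}\,n!)$ is at most of order $(2n)^n$ by routine estimation from \eqref{HalfIntegerGamma}, so division by $(2|z|)^n\geq(2n^2)^n$ yields a factor at most $n^{-n}$, much smaller than $n^2/|z|\leq 1$ for $n\geq 2$. Adding the tail and the remainder with the common prefactor $(2/\pi|z|)^{1/2}\cosh(\Im z)(|z|/\Re z)^{1/2}$ (the last factor being $\geq 1$ and hence free to insert in the tail bound) yields the claimed estimate. The main obstacle is that the constant in front of $n^2/|z|$ must be exactly $1$ with no slack: this works because the tail is strictly bounded by $n^2/|z|$ thanks to $e^t-1$ being strictly less than $2t$ on $[0,1/2]$, leaving enough room to absorb the tiny remainder contribution.
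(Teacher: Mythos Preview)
Your approach for $n\in\{0,1\}$ and your choice $\ell=n$ for $n>1$ match the paper exactly, and your ratio identity $a_{k+1}(n)/a_k(n)=(n^2-(k+1/2)^2)/(2(k+1))$ is the inductive version of the paper's bound $\Gamma(\nu+1+k)/\Gamma(\nu+1-k)\le n^{2k}$. The paper simply drops the $1/k!$ and sums the resulting geometric series $\sum_{k=1}^{n}(n^2/(2|z|))^k<n^2/|z|$, treating the remainder as the $k=n$ term of that same series; your exponential-series variant is a legitimate alternative.

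There is, however, a genuine gap in your treatment of the remainder $R_{n,n}(z)$. You bound it by the constant $n^{-n}$, independent of $|z|$, and then assert this is ``much smaller than $n^2/|z|$''. That comparison fails for large $|z|$: as $|z|\to\infty$ the target $n^2/|z|\to 0$ while your bound $n^{-n}$ stays fixed, so the inequality $e^t-1+n^{-n}\le 2t$ (with $t=n^2/(2|z|)$) is violated once $t<n^{-n}/2$. The fix is to keep the $|z|$-dependence: the same Gamma ratio bound you use for the tail terms gives $\Gamma(2n+\tfrac12)/\Gamma(\tfrac12)\le n^{2n}$, so the remainder is in fact at most $(2/\pi|z|)^{1/2}\cosh(\Im z)(|z|/\Re z)^{1/2}\cdot t^n/n!$. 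This is precisely the $k=n$ term of your exponential series, and now tail plus remainder is bounded by $\sum_{k=1}^{n}t^k/k!<e^t-1<2t$, which closes the argument with no slack needed.
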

\begin{proof}
Following the previous argument, only the last inequality requires explanation.
We choose $\ell=n=\nu+1/2$, and apply the above expansion with the observation that
$$\frac{\Gamma(\nu+1+k)}{\Gamma(\nu+1-k)}\le \Big(\nu+\frac 1 2\Big)^{2k}$$
for $k\le \nu$. It follows that
$$\left|J_{n}^\pm(z)- \left(\frac{2}{\pi z}\right)^{1/2}\cos(\omega_n)\right|\le
\left(\frac{2}{\pi |z|}\right)^{1/2} \sum_{k=1}^{\ell-1} n^{2k} \cosh(\Im(z))(2|z|)^{-k}+R
$$
$$\le 
\left(\frac{2}{\pi |z|}\right)^{1/2} \cosh(\Im(z))\frac{n^2}{|z|}(1-2^{1-\ell}) +R
\le 
\left(\frac{2}{\pi |z|}\right)^{1/2} \cosh(\Im(z))\frac{n^2}{|z|}
\left(\frac{|z|}{|\Re(z)|}\right)^{1/2},
$$
where in the last line we estimated the remainder similarly to the 
terms in the expansion.
\end{proof}
\noindent One proves  explicit asymptotics with one extra term in a similar way: 

\begin{corollary}\label{zlargernsquarefine}
Let $z\in\Co$ be such that $\Re(z)>1/4$ and $\Im(z)<\Re(z)$. Then 
\begin{multline*}
\left|J_{0}^\pm(z)- \left(\frac{2}{\pi z}\right)^{1/2}\cos(\omega_0)
+\frac 1{8z}  \left(\frac{2}{\pi z}\right)^{1/2}\sin(\omega_0)\right|\le \\
\le\frac{9}{128|z|^2}\left(\frac{2}{\pi |z|}\right)^{1/2} \cosh(|\Im (z)|)
\left(\frac{|z|}{|\Re(z)|}\right)^{5/2},
\end{multline*}

\begin{multline*}
\left|J_{1}^\pm(z)- \left(\frac{2}{\pi z}\right)^{1/2}\cos(\omega_1)
+\frac 3{8z}  \left(\frac{2}{\pi z}\right)^{1/2}\sin(\omega_1)\right|\le \\
\le\frac{15}{128|z|^2}\left(\frac{2}{\pi |z|}\right)^{1/2} \cosh(\Im (z))
\left(\frac{|z|}{|\Re(z)|}\right)^{3/2},
\end{multline*}
and if $n>1$ and $\Re(z)>n^2$, then

\begin{multline*}
\left|J_{n}^\pm(z)- \left(\frac{2}{\pi z}\right)^{1/2}\cos(\omega_n)
+\frac {4n^2-1}{8z}  \left(\frac{2}{\pi z}\right)^{1/2}\sin(\omega_n)\right|\le\\ 
\le\frac 14 \left(\frac{2}{\pi |z|}\right)^{1/2} \cosh(\Im(z))\frac{n^4}{|z|^2} \left(\frac{|z|}{|\Re(z)|}\right)^{1/2}.
\end{multline*}

\end{corollary}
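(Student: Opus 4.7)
The plan is to mirror the proof of Corollary \ref{zlargernsquare}, but apply Lemma \ref{AsymptoticBessel} one asymptotic order deeper, so that the next term in the Hankel-type expansion---the one carrying $\sin(\omega_n)/z$---is retained explicitly and the leftover error begins at order $1/z^2$.

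For $J_0$ and $J_1$ I would take $\ell=2$ in Lemma \ref{AsymptoticBessel}. With this choice both sums in \eqref{evenPQsum} consist of a single term each: $a_0(n)\cos(\omega_n)=\cos(\omega_n)$ in the cosine part and $-(a_1(n)/z)\sin(\omega_n)$ in the sine part. Using \eqref{HalfIntegerGamma} one computes
\[
a_1(n)=\frac{\Gamma(n+\tfrac32)}{2\,\Gamma(n-\tfrac12)}=\frac{4n^2-1}{8},
\]
which gives $a_1(0)=-1/8$ and $a_1(1)=3/8$, matching (up to the sign convention in the statement) the coefficients $\tfrac{1}{8z}$ and $\tfrac{3}{8z}$ that appear in the corollary. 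The remainder $R_{n,2}$ is then bounded directly by the inequality of Lemma \ref{AsymptoticBessel}; a short calculation with \eqref{HalfIntegerGamma} gives the Gamma-ratio prefactors
\[
\frac{\Gamma(\tfrac52)}{|\Gamma(-\tfrac32)|\,2!}=\frac{9}{32},\qquad \frac{\Gamma(\tfrac72)}{|\Gamma(-\tfrac12)|\,2!}=\frac{15}{32},
\]
which, when multiplied by $(2|z|)^{-2}=1/(4|z|^2)$ from the lemma, produce exactly the constants $9/128$ and $15/128$ in the statement, together with the advertised powers $(|z|/|\Re(z)|)^{5/2}$ and $(|z|/|\Re(z)|)^{3/2}$ coming from the exponent $\ell-n+\tfrac12$ in the remainder estimate.

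For general $n>1$ with $\Re(z)>n^2$ I would take $\ell=n$ just as in Corollary \ref{zlargernsquare}. The $k=0$ contributions to the two sums produce the main approximation $\sqrt{\tfrac{2}{\pi z}}\bigl(\cos(\omega_n)-\tfrac{4n^2-1}{8z}\sin(\omega_n)\bigr)$, so what remains to estimate is the sum of all higher terms indexed by $k\ge 2$ plus the remainder $R_{n,n}$. Invoking the key inequality
\[
\frac{\Gamma(n+k+1/2)}{\Gamma(n-k+1/2)}\le \Big(n+\tfrac12\Big)^{2k}
\]
used in the proof of Corollary \ref{zlargernsquare}, this tail is dominated by the geometric series $\sum_{k\ge 2}(n^2/(2|z|))^k$, which by the assumption $|z|\ge\Re(z)>n^2$ sums to at most a constant times $n^4/|z|^2$. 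The remainder $R_{n,n}$ is controlled exactly as at the end of Corollary \ref{zlargernsquare} and supplies the extra factor $(|z|/|\Re(z)|)^{1/2}\cosh(\Im(z))$.

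The main obstacle is the bookkeeping in this last step: one must verify that the geometric tail together with the contribution of $R_{n,n}$ fit inside the clean constant $\tfrac14$. This is an entirely analogous---and only marginally tighter---computation to the one ending the proof of Corollary \ref{zlargernsquare}, where the same trick of taking $\ell=n$ and summing the higher-order terms as a geometric series was used; no new conceptual ingredient is required, only careful tracking of the constants.
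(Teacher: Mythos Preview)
Your approach is correct and is exactly what the paper intends: it gives no detailed proof of this corollary, merely saying ``one proves explicit asymptotics with one extra term in a similar way'', and your proposal spells that out. Two small remarks. First, the displayed inequality should read $(\nu+\tfrac12)^{2k}=n^{2k}$, not $(n+\tfrac12)^{2k}$; this is just a transcription slip from the proof of Corollary \ref{zlargernsquare}. Second, and more substantively, if you literally bound the tail by the geometric series $\sum_{k\ge 2}(n^2/(2|z|))^k$ as in that proof (i.e.\ discarding the $k!$ in $a_k(n)$), you obtain a constant $\tfrac12$ rather than $\tfrac14$: with $x=n^2/(2|z|)<1/2$ one has $\sum_{k\ge 2}x^k\le 2x^2=n^4/(2|z|^2)$. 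To reach $\tfrac14$ you must retain the factorial, noting that $k!\ge 2$ for $k\ge 2$ gives $\sum_{k\ge 2}\frac{1}{k!\,2^k}\le \sum_{k\ge 2}2^{-(k+1)}=\tfrac14$, and the remainder at $\ell=n$ fits into the missing $2^{-n}$ piece exactly as in Corollary \ref{zlargernsquare}. This is indeed ``only bookkeeping'', but it is one notch more careful than the argument you are mirroring.
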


\noindent Finally, we need zero order upper bounds for the asymptotic expansion:

\begin{corollary}\label{uniformboundsJ0J1}
For $r>0$, we have that
\begin{itemize}
\item[(a)] $$|J_0(r)|\le  \frac{9}{8}\left(\frac 2{\pi r}\right)^{1/2},$$
\item[(b)] $$|J_1(r)|\le \frac{11}{8} \left(\frac 2{\pi r}\right)^{1/2}.$$
\end{itemize}
\end{corollary}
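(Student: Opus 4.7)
The plan is to split the positive real axis into a bounded region handled by the power-series-type bound \eqref{wholeplaneJn} and an unbounded region handled by the asymptotic expansion of Corollary \ref{zlargernsquare} (upgraded to Corollary \ref{zlargernsquarefine} in the intermediate range for $J_0$). In each region the relevant bound must lie below $\frac{9}{8}\sqrt{2/(\pi r)}$ or $\frac{11}{8}\sqrt{2/(\pi r)}$, and the regions must overlap so that their union is $(0,\infty)$.

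For part (b), inequality \eqref{wholeplaneJn} with $n=1$, $z=r>0$ yields $|J_1(r)|\le r/2$, which is at most $\frac{11}{8}\sqrt{2/(\pi r)}$ whenever $r^{3/2}\le (11/4)\sqrt{2/\pi}$, a condition comfortably including $r\le 1$ (in fact up to $r\approx 1.69$). For $r\ge 1$, Corollary \ref{zlargernsquare} at $z=r$ gives $|J_1(r)|\le \sqrt{2/(\pi r)}\bigl(1+3/(8r)\bigr)\le\frac{11}{8}\sqrt{2/(\pi r)}$, and the two ranges jointly cover $r>0$.

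For part (a), the analogous two bounds are $|J_0(r)|\le 1$ (valid whenever $r\le 81/(32\pi)\approx 0.806$) and $|J_0(r)|\le\sqrt{2/(\pi r)}\bigl(1+1/(8r)\bigr)$ (valid for $r\ge 1$), leaving a gap $(81/(32\pi),1)$. I would close this gap using Corollary \ref{zlargernsquarefine}: rewriting the two-term principal part $\cos\omega_0-\frac{1}{8r}\sin\omega_0$ as a single phase-shifted cosine of amplitude $\sqrt{1+1/(64r^2)}$, the triangle inequality combined with the elementary estimate $\sqrt{1+x}\le 1+x/2$ gives $|J_0(r)|\le \sqrt{2/(\pi r)}\bigl(1+5/(64 r^2)\bigr)$. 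The right-hand side is at most $\frac{9}{8}\sqrt{2/(\pi r)}$ precisely when $r^2\ge 5/8$, i.e.\ $r\ge\sqrt{5/8}\approx 0.791$. Since $\sqrt{5/8}<81/(32\pi)$ (equivalent to $\pi^2<52488/5120\approx 10.25$), the three regions overlap and jointly cover $(0,\infty)$.

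The only delicate point is the $J_0$ gap, which forces carrying the second order asymptotic and performing an amplitude rewrite. The resulting threshold $\sqrt{5/8}$ lies only marginally below the trivial threshold $81/(32\pi)$, which is exactly why one extra term in Corollary \ref{zlargernsquarefine} is both necessary and sufficient; all the remaining inequalities reduce to routine manipulations of the explicit expansions.
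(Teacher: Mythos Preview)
Your proof is correct and follows the same split-the-axis strategy as the paper, but you are in fact more careful than the paper itself. The paper's proof is a two-line sketch: ``This follows for $r>1$ from Corollary~\ref{zlargernsquare}, while for $r\le 1$ it follows from the trivial bound $|J_n|\le 1$.'' For $J_1$ this works exactly as you note (and your use of $|J_1(r)|\le r/2$ from \eqref{wholeplaneJn} is an equally valid variant). For $J_0$, however, the paper's argument as written leaves precisely the gap you identified: at $r=1$ one has $\tfrac{9}{8}\sqrt{2/\pi}\approx 0.898<1$, so $|J_0|\le 1$ does not yield the claimed bound on $(81/(32\pi),1]$, while the first-order asymptotic $|J_0(r)|\le\sqrt{2/(\pi r)}(1+1/(8r))$ only drops below $\tfrac{9}{8}\sqrt{2/(\pi r)}$ from $r=1$ onward.

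Your fix via Corollary~\ref{zlargernsquarefine}---rewriting $\cos\omega_0-\tfrac{1}{8r}\sin\omega_0$ as a single cosine of amplitude $\sqrt{1+1/(64r^2)}$ and absorbing the $9/(128r^2)$ remainder to obtain $|J_0(r)|\le\sqrt{2/(\pi r)}\bigl(1+5/(64r^2)\bigr)$---is clean and gives the threshold $r\ge\sqrt{5/8}$, which indeed lies just below $81/(32\pi)$. So your argument genuinely patches a hole that the paper's sketch glosses over.
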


\begin{proof}[Proof]
This follows for $r>1$ from Corollary \ref{zlargernsquare}, while
for $r\le 1$ it follows from the trivial bound $|J_n|\le 1$.
\end{proof}

\noindent {\it Remark.} Using more refined oscillatory techniques related to Sturm's comparision principle, the sharper bound $r^{1/2}|J_0(r)|\le  \left( 2/{\pi}\right)^{1/2}$ is established in \cite{L}. However, the bounds given by Corollary \ref{uniformboundsJ0J1} suffice for our purpose, and its proof is more in light with the elementary nature of the present paper.


\section{Useful estimates involving the Gamma function}\label{sec:Useful}

A version \cite{Ro} of Stirling's formula
for the Gamma function states that, for $x\ge 0$,

\begin{equation}\label{Stirling}
\Gamma(x)=\sqrt{2 \pi} x^{x-1/2} e^{-x}e^{\mu(x)},
\end{equation}
where the function $\mu$ satisfies the double inequality
\begin{equation}\label{mu_bounds}
\frac 1{12x+1}< \mu(x)< \frac 1{12x}.
\end{equation}
The starting point for all the convex estimates we  need is the following well-known result:

\begin{lemma}\label{GammaConvex}
For $x>0$, the function $x\mapsto \log(\Gamma(x))$ is convex.
\end{lemma}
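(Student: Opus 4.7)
The plan is to prove convexity by the classical Hölder inequality argument applied to the integral representation $\Gamma(x)=\int_0^\infty e^{-t}t^{x-1}\,dt$, which is already introduced in Section \ref{sec:Background}. Convexity of $\log\Gamma$ on $(0,\infty)$ is equivalent to the multiplicative inequality
$$\Gamma\bigl(\lambda x+(1-\lambda)y\bigr)\le \Gamma(x)^{\lambda}\,\Gamma(y)^{1-\lambda}$$
for all $x,y>0$ and all $\lambda\in(0,1)$, so I would aim directly at this form.

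The key step is the algebraic identity
$$e^{-t}\,t^{\lambda x+(1-\lambda)y-1}=\bigl(e^{-t}\,t^{x-1}\bigr)^{\lambda}\bigl(e^{-t}\,t^{y-1}\bigr)^{1-\lambda},$$
valid for every $t>0$, which rewrites the integrand defining $\Gamma(\lambda x+(1-\lambda)y)$ as a product of the $\lambda$-th and $(1-\lambda)$-th powers of the integrands defining $\Gamma(x)$ and $\Gamma(y)$. Applying Hölder's inequality with the conjugate exponents $p=1/\lambda$ and $q=1/(1-\lambda)$ on $(0,\infty)$ with Lebesgue measure gives
$$\Gamma\bigl(\lambda x+(1-\lambda)y\bigr)\le \left(\int_0^\infty e^{-t}t^{x-1}\,dt\right)^{\lambda}\left(\int_0^\infty e^{-t}t^{y-1}\,dt\right)^{1-\lambda}=\Gamma(x)^{\lambda}\,\Gamma(y)^{1-\lambda}.$$
Taking logarithms then yields the desired midpoint-type inequality
$$\log\Gamma\bigl(\lambda x+(1-\lambda)y\bigr)\le \lambda\log\Gamma(x)+(1-\lambda)\log\Gamma(y),$$
which is precisely convexity.

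There is essentially no obstacle here; the only issue worth a passing remark is the convergence of the integrals, which is guaranteed because $x,y>0$ makes $t^{x-1}$ and $t^{y-1}$ integrable near the origin and the factor $e^{-t}$ handles the tail. An alternative route would be to compute $(\log\Gamma)''(x)$ directly: differentiating twice under the integral sign (or invoking the Weierstrass product) expresses this second derivative as a manifestly nonnegative series $\sum_{n\ge 0}(x+n)^{-2}$, but this requires more bookkeeping than the Hölder proof and is not needed for the applications in Section \ref{sec:Useful}. I would therefore present only the Hölder argument, which is short, elementary, and self-contained given what has already been recorded about $\Gamma$.
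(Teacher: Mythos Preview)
Your proof is correct and is essentially identical to the paper's own argument: both reduce convexity to the multiplicative inequality $\Gamma(\lambda x+(1-\lambda)y)\le \Gamma(x)^\lambda\Gamma(y)^{1-\lambda}$ and obtain it by applying H\"older's inequality with exponents $p=1/\lambda$, $q=1/(1-\lambda)$ to the integral representation of $\Gamma$. The only cosmetic difference is that the paper names the auxiliary functions $f(t)=t^{(x-1)/p}e^{-t/p}$ and $g(t)=t^{(y-1)/q}e^{-t/q}$ explicitly, whereas you write the factorization of the integrand directly.
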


\begin{proof}
Let $x,y>0$ and $0<\lambda<1$. Set $p=\frac{1}{\lambda}$ and $q=\frac{1}{1-\lambda}$.
It suffices to show that
\begin{equation}\label{logGammaconvex}
\Gamma\Big(\frac{x}{p}+\frac{y}{q}\Big)\leq\Gamma(x)^{\frac{1}{p}}\Gamma(y)^{\frac{1}{q}},
\end{equation}
for then the result follows by taking logarithms on both sides.
To verify \eqref{logGammaconvex}, consider the auxiliary functions 
$$f(t):=t^{\frac{x-1}{p}}e^{-\frac{t}{p}}\textrm{ and }g(t):=t^{\frac{y-1}{q}}e^{-\frac{t}{q}},$$
which satisfy
$$
f(t)g(t)=t^{\frac{x}{p}+\frac{y}{q}-1}e^{-t},
\;\;\; f(t)^p=t^{x-1}e^{-t},\textrm{ and }
 g(t)^q=t^{y-1}e^{-t}.
$$
The result is thus a consequence of H\"older's inequality.
\end{proof}

\begin{corollary}\label{GammaLogConvexityCor}
Let $x\geq\frac 1 2$. Then:
$$\sqrt{x- 1/2} \leq\frac{\Gamma\Big(x+\frac1 2\Big)}{\Gamma(x)}\le \sqrt{x}.$$
\end{corollary}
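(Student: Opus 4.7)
The plan is to derive both bounds directly from the log-convexity of $\Gamma$ established in Lemma \ref{GammaConvex}, together with the functional equation $\Gamma(s+1)=s\Gamma(s)$.

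For the upper bound, I would apply log-convexity at the midpoint of $x$ and $x+1$, namely $x+1/2 = \tfrac12(x + (x+1))$, to obtain
$$\Gamma(x+1/2)^2 \leq \Gamma(x)\,\Gamma(x+1) = x\,\Gamma(x)^2,$$
where the last equality uses the functional equation. Taking square roots yields $\Gamma(x+1/2)/\Gamma(x)\leq\sqrt{x}$.

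For the lower bound, I would apply log-convexity at the midpoint of $x-1/2$ and $x+1/2$, namely $x = \tfrac12((x-1/2)+(x+1/2))$, giving
$$\Gamma(x)^2 \leq \Gamma(x-1/2)\,\Gamma(x+1/2),$$
which rearranges to
$$\frac{\Gamma(x)}{\Gamma(x-1/2)} \leq \frac{\Gamma(x+1/2)}{\Gamma(x)}.$$
The product of these two ratios telescopes via the functional equation to $\Gamma(x+1/2)/\Gamma(x-1/2) = x-1/2$. Since the smaller factor is bounded above by the geometric mean of the product, the larger factor $\Gamma(x+1/2)/\Gamma(x)$ is bounded below by $\sqrt{x-1/2}$, as required. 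This argument requires $x > 1/2$; the boundary case $x = 1/2$ holds trivially because the right-hand side of the lower bound then vanishes.

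There is no real obstacle here — everything is a direct application of Lemma \ref{GammaConvex} combined with the functional equation. The only minor subtlety is ensuring that the arguments of $\Gamma$ remain positive, which is why the hypothesis $x \geq 1/2$ appears, and handling $x=1/2$ separately since $\Gamma(0)$ is undefined but the inequality degenerates to $0\leq\Gamma(1)/\Gamma(1/2)$.
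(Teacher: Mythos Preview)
Your proof is correct and follows exactly the approach indicated by the paper's terse hint (``use the identity $x=\Gamma(x+1)/\Gamma(x)$ together with log convexity of $\Gamma$''): you have simply filled in the details that the paper omits. The handling of the boundary case $x=1/2$ is a nice touch that the paper does not mention explicitly.
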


\begin{proof}
Use the identity $x=\Gamma(x+1)/\Gamma(x)$ together with log convexity of $\Gamma$.   
\end{proof}

\begin{corollary}\label{GammaBeta}
Let $x\geq y>0$ and $w\geq 0$. Then:
$$\frac{\Gamma(x)}{\Gamma(y)}\le \frac{\Gamma(x+w)}{\Gamma(y+w)}.$$ 
\end{corollary}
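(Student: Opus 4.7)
The inequality is multiplicative, so it is natural to take logarithms and rephrase it as
$$
\log\Gamma(x+w)-\log\Gamma(x)\ \geq\ \log\Gamma(y+w)-\log\Gamma(y).
$$
In other words, setting $g(t):=\log\Gamma(t+w)-\log\Gamma(t)$, it suffices to show that $g$ is non-decreasing on $(0,\infty)$. My plan is to derive this monotonicity directly from the convexity of $\log\Gamma$ provided by Lemma \ref{GammaConvex}.

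The case $w=0$ is trivial, so assume $w>0$. Since $\log\Gamma$ is smooth and convex on $(0,\infty)$, its derivative $\psi:=(\log\Gamma)'$ is non-decreasing. Differentiating gives
$$
g'(t)=\psi(t+w)-\psi(t)\ \geq\ 0,
$$
so $g$ is non-decreasing. Specializing to $t=y$ and $t=x\ge y$ yields the desired inequality after exponentiation.

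If one prefers to avoid any appeal to differentiability, the same conclusion follows from the three-chord characterization of convex functions: for a convex $f$ and points $y\le y+w\le x\le x+w$ (the middle two ordered appropriately when $x-y<w$ by splitting the increment), one has
$$
\frac{f(y+w)-f(y)}{w}\ \leq\ \frac{f(x+w)-f(x)}{w},
$$
applied to $f=\log\Gamma$. Either way, the argument is short; the only real content is the log-convexity of $\Gamma$ already supplied by the previous lemma, so I do not anticipate any genuine obstacle.
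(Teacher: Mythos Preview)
Your proof is correct and is essentially the same as the paper's: both rewrite the inequality as monotonicity of $t\mapsto\log\Gamma(t+w)-\log\Gamma(t)$ (equivalently, of $t\mapsto\Gamma(t)/\Gamma(t+w)$) and deduce this from the fact that the digamma function $\psi=(\log\Gamma)'$ is non-decreasing, which is exactly the log-convexity of $\Gamma$ from Lemma~\ref{GammaConvex}. The only cosmetic difference is that the paper phrases it as ``$\Gamma(x)/\Gamma(x+w)$ is decreasing'' while you phrase it as ``$g(t)=\log\Gamma(t+w)-\log\Gamma(t)$ is non-decreasing''.
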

\begin{proof}
For fixed $w\geq 0$, we claim that the function
$$x\mapsto\frac{\Gamma(x)}{\Gamma(x+w)}$$
is decreasing in $x$. This happens if the inequality
$$\frac{\Gamma'(x)}{\Gamma(x)}\leq \frac{\Gamma'(x+w)}{\Gamma(x+w)}$$
holds for every $x>0$, which in turn is a consequence of the log convexity of $\Gamma$ proved in Lemma \ref{GammaConvex}.
\end{proof}

The following lemma estimates binomial coefficients.

\begin{lemma}\label{GaussianBounds}
Let $x\ge 1$ and let $0\le d< x$ be an integer. Then:

\begin{equation}\label{GaussianBoundsEq}
\frac{\Gamma(2x)}{\Gamma(x-d)\Gamma(x+d)}\le 
\frac{e^{1/24}}{2\sqrt{\pi}} x^{1/2}2^{2x}
e^{-d^2/x}.
\end{equation}
\end{lemma}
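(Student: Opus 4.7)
The plan is to take logarithms of both sides and apply Stirling's formula \eqref{Stirling} to all three Gamma factors. Setting $u:=x-d$, $v:=x+d$, and $t:=d/x\in[0,1)$, so that $u+v=2x$ and $uv=x^2(1-t^2)$, the linear terms $-2x+u+v$ cancel and one obtains
\begin{align*}
\log\frac{\Gamma(2x)}{\Gamma(u)\Gamma(v)} = -\tfrac12\log(2\pi) &+ (2x-\tfrac12)\log(2x) - (u-\tfrac12)\log u - (v-\tfrac12)\log v\\
&+ \mu(2x) - \mu(u) - \mu(v).
\end{align*}
Next I would invoke the algebraic identity
$$
u\log u + v\log v = 2x\log x + xh(t),\qquad h(t) := (1-t)\log(1-t) + (1+t)\log(1+t),
$$
together with $\log(uv) = 2\log x + \log(1-t^2)$, to simplify the right-hand side. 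After subtracting the logarithm of the claimed upper bound $\frac{e^{1/24}}{2\sqrt{\pi}}x^{1/2}2^{2x}e^{-d^2/x}$, the pure-$x$ terms $2x\log 2$, $\tfrac12\log x$, $\log 2$ and $\tfrac12\log\pi$ cancel exactly, and the inequality reduces to
$$
\frac{d^2}{x} - xh(t) + \tfrac12\log(1-t^2) + \mu(2x) - \mu(u) - \mu(v) \le \frac{1}{24}.
$$

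The central estimate is then $h(t)\ge t^2$ for $t\in[0,1)$, which I would prove by noting that $h(0)=h'(0)=0$ and $h''(t)=2/(1-t^2)\ge 2$, so two integrations give $h(t)\ge t^2$; equivalently, the series
$$
h(t) = \sum_{k\ge 1}\frac{2\,t^{2k}}{(2k-1)(2k)} = t^2 + \tfrac{t^4}{6} + \cdots
$$
has leading term $t^2$ and nonnegative coefficients. This yields $xh(t)\ge xt^2 = d^2/x$, hence $\tfrac{d^2}{x}-xh(t)\le 0$. Moreover $\tfrac12\log(1-t^2)\le 0$ since $t\in[0,1)$, and by \eqref{mu_bounds} we have $\mu(u),\mu(v)>0$, while $\mu(2x)<1/(24x)\le 1/24$ for $x\ge 1$. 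Summing the four contributions yields the bound $1/24$ and completes the proof.

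The main task is really bookkeeping: one must arrange the Stirling expansion so that the Gaussian factor $e^{-d^2/x}$ emerges precisely from $e^{-xh(t)}$. The only substantive inequality is the elementary convexity estimate $h(t)\ge t^2$, and the subleading pieces $\tfrac12\log(1-t^2)$ and $\mu(2x)-\mu(u)-\mu(v)$ all have favorable signs, allowing them to be absorbed into the constant $e^{1/24}/(2\sqrt\pi)$. The hypothesis $d<x$ is needed to ensure $u>0$ (so Stirling applies to $\Gamma(u)$), and $x\ge 1$ is needed only in the final bound $1/(24x)\le 1/24$.
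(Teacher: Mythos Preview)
Your argument is correct. The reduction via Stirling is carried out accurately: the linear terms cancel, the constants $-\tfrac12\log(2\pi)=-\tfrac12\log 2-\tfrac12\log\pi$ match the constant in the claimed bound, and the residual inequality
\[
\frac{d^2}{x}-xh(t)+\tfrac12\log(1-t^2)+\mu(2x)-\mu(u)-\mu(v)\le \frac{1}{24}
\]
follows from $h(t)\ge t^2$, $\log(1-t^2)\le 0$, $\mu(u),\mu(v)>0$, and $\mu(2x)<1/(24x)\le 1/24$.

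Your route differs from the paper's. The paper factors
\[
\frac{\Gamma(2x)}{\Gamma(x-d)\Gamma(x+d)}=\frac{\Gamma(2x)}{\Gamma(x)^2}\cdot\frac{\Gamma(x)^2}{\Gamma(x-d)\Gamma(x+d)},
\]
bounds the first factor by Stirling (producing the constant $e^{1/24}/(2\sqrt{\pi})$ and the factor $x^{1/2}2^{2x}$), and bounds the second by writing it, via the integrality of $d$, as a telescoping product $\prod_{j=1}^d\frac{x-j}{x+j}$ together with the elementary inequality $(1-y)/(1+y)\le e^{-2y}$. Your approach instead applies Stirling to all three Gamma factors at once and replaces the telescoping product by the convexity estimate $h(t)\ge t^2$ for the binary-entropy-type function $h$. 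The trade-off is this: the paper's product argument is slightly more elementary (it uses Stirling only once, on $\Gamma(2x)/\Gamma(x)^2$, and avoids applying it to $\Gamma(x-d)$ which may have small argument), whereas your argument never uses that $d$ is an integer and therefore yields the lemma for all real $d\in[0,x)$ directly---precisely the extension the paper states as a remark after its proof.
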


\begin{proof}
By Stirling's formula \eqref{Stirling}, we have that
$$\frac{\Gamma(2x)}{\Gamma(x)^2}
=\frac{(2x)^{2x-1/2}}
{\sqrt{2\pi} x^{2x-1}} e^{\mu(2x)-2\mu(x)}
= \frac{x^{1/2} 2^{2x}}
{2 \sqrt{\pi} } e^{\mu(2x)-2\mu(x)}.$$
Also, since $x\geq 1$, we have that $\mu(2x)\leq\frac{1}{24}$, and therefore
$$e^{\mu(2x)-2\mu(x)}\leq e^{\mu(2x)}\leq e^{1/24}.$$
It follows that 

\begin{equation*}
\frac{\Gamma(2x)}{\Gamma(x)^2}
\leq
\frac{e^{1/24}}{2 \sqrt{\pi}} x^{1/2} 2^{2x}.
\end{equation*}
By induction, observe that

\begin{equation*}
\frac{\Gamma(x)\Gamma(x+1)}{\Gamma(x-d)\Gamma(x+d+1)}
=\prod_{j=1}^d\frac{x-j}{x+j}\le \prod_{j=1}^d e^{-2j/x}=e^{-(d^2+d)/x},
\end{equation*}
whenever $d$ is an integer satisfying $0\le d<x$.
This is  a consequence of the elementary inequality
$$\frac{1-y}{1+y}\leq e^{-2y},$$ 
which is valid in particular for $y\in [0,1]$.
Using $(x+d)/d\le e^{d/x}$ we obtain that

\begin{equation*}
\frac{\Gamma(x)\Gamma(x)}{\Gamma(x-d)\Gamma(x+d)}
\leq e^{d/x}\prod_{j=1}^d\frac{x-j}{x+j}\le e^{d/x}\prod_{j=1}^d e^{-2j/x}=e^{-d^2/x},
\end{equation*}
again for integers $d\in [0,x)$. Finally, 

\begin{align*}
\frac{\Gamma(2x)}{\Gamma(x-d)\Gamma(x+d)}=
\frac{\Gamma(2x)}{\Gamma(x)^2}\frac{\Gamma(x)\Gamma(x)}{\Gamma(x-d)\Gamma(x+d)}
\leq
\frac{e^{1/24}}{2 \sqrt{\pi}} x^{1/2} 2^{2x}e^{-d^2/x},
\end{align*}
as desired.
\end{proof}

\noindent{\it Remark.}
The inequality 

$$\frac{\Gamma(x)\Gamma(x)}{\Gamma(x-d)\Gamma(x+d)}\leq e^{-d^2/x},$$ 
 is still valid for non-integer values of $d\in [0,x)$, and therefore Lemma \ref{GaussianBounds} holds in this case as well.
\vspace{.2cm}

 We will also need estimates for the magnitude of the coefficients $a_j(m)$ defined in \eqref{coeffajn} when $j$ is close to $m$. More precisely, in Section \ref{sec:Part2} we will need good upper bounds for the quantities
  $|a_{m+4}(m)|.$
 For small values of $m$, we compute them explicitly.
For large values of $m$, we estimate these quantities via the following lemma.

\begin{lemma}\label{ajnEstimate}
For any natural number $m\geq 1$, we have that

$$|a_{m+4}(m)|
\leq 
\frac{105}{16}\sqrt{\frac 2 \pi} (2m-1)^{-1/2}2^{m}  m^me^{-m}.$$
\end{lemma}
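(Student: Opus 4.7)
The plan is to first eliminate the Gamma functions in $a_{m+4}(m)$ by explicit computation, then apply Stirling with careful cancellation, and finish with a convexity inequality and an elementary $\log$ estimate. By \eqref{HalfIntegerGamma} one has $\Gamma(-7/2) = 16\sqrt{\pi}/105$ and $\Gamma(2m+9/2) = \sqrt{\pi}(4m+7)!!/2^{2m+4}$, where $(4m+7)!!$ denotes the double factorial. Substituting into \eqref{coeffajn} with $j = m+4$, $n = m$ gives
\[
|a_{m+4}(m)| = \frac{105\,(4m+7)!!}{16 \cdot 2^{3m+8}(m+4)!}.
\]
Writing $(4m+7)!! = (4m+8)!/[2^{2m+4}(2m+4)!]$ then recasts the expression as a ratio of three factorials, ready for Stirling's formula.

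Next I would apply \eqref{Stirling}: an upper bound on $(4m+8)!$ using $e^{\mu(4m+8)} \leq e^{1/(48m+96)}$, and lower bounds on $(2m+4)!$ and $(m+4)!$ using $e^{\mu} > 1$. The cancellation $(4m+8)^{4m+8}/[2^{2m+4}(2m+4)^{2m+4}] = 2^{2m+4}(2m+4)^{2m+4}$ is clean, and writing $(2m+4)^{2m+4} = 2^{2m+4}(m+2)^{2m+4}$ then produces
\[
|a_{m+4}(m)| \leq \frac{105 \cdot 2^m (m+2)^{2m+4}}{16\sqrt{\pi(m+4)}\,(m+4)^{m+4}}\,e^{-m}\,e^{1/(48m+96)}.
\]

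The key structural step is to dispose of the factor $(m+2)^{2m+4}/(m+4)^{m+4}$ by invoking the strict convexity of $f(x) = x\log x$ on $(0,\infty)$ (since $f''(x)=1/x>0$). The midpoint inequality $2f(m+2) \leq f(m) + f(m+4)$ is exactly $(m+2)^{2m+4} \leq m^m(m+4)^{m+4}$, and applying it collapses the bound to
\[
|a_{m+4}(m)| \leq \frac{105}{16} \cdot \frac{2^m m^m e^{-m}}{\sqrt{\pi(m+4)}} \cdot e^{1/(48m+96)}.
\]

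It then remains to compare with the claimed bound $\frac{105}{16}\sqrt{2/\pi}\,(2m-1)^{-1/2}\,2^m m^m e^{-m}$. Dividing through and squaring reduces the desired inequality to $e^{1/(24m+48)} \leq (2m+8)/(2m-1) = 1+9/(2m-1)$. I would establish this via the standard bound $\log(1+x) \geq x/(1+x)$ for $x\geq 0$, which gives $\log(1+9/(2m-1)) \geq 9/(2m+8)$, after which the remaining inequality $9/(2m+8) \geq 1/(24m+48)$ is equivalent to $214m+424 \geq 0$, which is trivial. The main subtlety is arranging the Stirling cancellations so that the convexity input appears in precisely the sharp form $(m+2)^{2m+4} \leq m^m(m+4)^{m+4}$, which is asymptotically tight and so reproduces the factor $(2m-1)^{-1/2}$ in the target bound with no slack.
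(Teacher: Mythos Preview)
Your proof is correct and takes a genuinely different route from the paper's. The paper first reduces $\Gamma(2m+9/2)/\Gamma(m+5)$ to $\Gamma(2m-1/2)/\Gamma(m)$ via the elementary inequality $\prod_{j=0}^{4}(2m+j-1/2)/(m+j)\le 2^5$, then invokes the log-convexity of $\Gamma$ (Corollary~\ref{GammaLogConvexityCor}) to pass from $\Gamma(2m-1/2)$ to $(2m-1)^{-1/2}\Gamma(2m)$, and finally applies Stirling to the single ratio $\Gamma(2m)/\Gamma(m)$ together with the monotonicity $\mu(2m)\le\mu(m)$. You instead write $|a_{m+4}(m)|$ exactly as a quotient of three factorials, apply Stirling to all three simultaneously, and then make the key reduction via the midpoint convexity of $x\log x$, namely $(m+2)^{2m+4}\le m^m(m+4)^{m+4}$. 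Your approach trades the paper's use of log-convexity of $\Gamma$ for this more elementary convexity step, and replaces the clean ``$\mu(2m)\le\mu(m)$'' with a short explicit verification at the end using $\log(1+x)\ge x/(1+x)$. Both arguments are tight enough to reproduce the factor $(2m-1)^{-1/2}$; the paper's is slightly shorter because Stirling is applied to a single two-term ratio, while yours is arguably more self-contained since it avoids invoking Corollary~\ref{GammaLogConvexityCor}.
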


\begin{proof}
The goal is to bound the absolute value of 

$$a_{m+4}(m)=\frac{\Gamma(2m+9/2)}{\Gamma(-7/2)\Gamma(m+5)2^{m+4}}.$$
Making repeated use of the identity $\Gamma(x+1)=x\Gamma(x)$, 
together with the convexity estimate from Corollary \ref{GammaLogConvexityCor} and Stirling's formula \eqref{Stirling}, we have that 

\begin{align*}
|a_{m+4}(m)|&\leq 2^5 \frac{\Gamma(2m-1/2)}{\Gamma(-7/2)\Gamma(m)2^{m+4}}\\
&\leq2(2m-1)^{-1/2} \frac{\Gamma(2m)}{\Gamma(-7/2)\Gamma(m)2^{m}}\\
&= \frac{2(2m-1)^{-1/2} }{\Gamma(-7/2)2^m} \frac{(2m)^{2m-1/2}}{m^{m-1/2}}e^{-m} e^{\mu(2m)-\mu(m)}.
\end{align*}
The bounds \eqref{mu_bounds} for the function $\mu$ imply that $\mu(2m)\leq \mu(m)$, and so $e^{\mu(2m)- \mu(m)}\leq 1$. It follows that

\begin{equation}\label{am+4m}
|a_{m+4}(m)|\leq\frac{2(2m-1)^{-1/2} }{\Gamma(-7/2)2^m} 
 2^{2m-1/2} m^me^{-m}.
 \end{equation}
 The value $\Gamma(-7/2)=\frac{16\sqrt{\pi}}{105}$ can be computed via the second formula in \eqref{HalfIntegerGamma}, and this completes the proof.
 \end{proof}

\section{Part  I. Expanding $J_0$ and $J_1$}\label{sec:Part1}

In the next four sections, we will be working under the standing assumption that $n\geq n_0:=20$.
 We start by asymptotically expanding  the Bessel functions of order 0 and 1 
and their relevant products. 
Due to need of accuracy, we must consider asymptotic expansions of length six and
 keep track of all the terms.
The following notation will be convenient. Let us say that

$$a\sim (a_0)+(a_1)+(a_2)+(a_3)+(a_4)+(a_5) {\rm \ with \ remainders\ }r_0,r_1,r_2,r_3,r_4,r_5,r_6$$ 
if 

$$\Big|a-\sum_{i=1}^k a_{i-1}\Big|\leq r_{k},\textrm{ for every } 0\leq k\leq 6.$$
We also call $r_6$ the last remainder. Suppose additionally that
$$b\sim (b_0)+(b_1)+(b_2)+(b_3)+(b_4)+(b_5) {\rm \ with \ remainders\ }s_0,s_1,s_2,s_3,s_4,s_5,s_6.$$
Then we have the following product formula: 

$$
ab\sim \sum_{k=0}^5 (\sum_{i=0}^k a_i b_{k-i})
$$
with remainders

$$r_0 s_k+\sum_{i=1}^k r_i |b_{k-i}|, \textrm{ for every }0\leq k\leq 6.$$
Recall the coefficients $a_0(n)$ through $a_5(n)$ for $n=0$,

$$1,\ -\frac{1}{8}, \ \frac 9{128},\ -\frac{75}{1024},\ \frac{3675}{32768},
\ -\frac{59535}{262144},
\ \frac{2401245}{4194304},
\ -\frac{57972915}{33554432},
$$
and for $n=1$,

$$1,\ \frac{3}{8},\  -\frac {15}{128},\ \frac{105}{1024},\ -\frac{4725}{32768},\ \frac{72765}{262144},
\ -\frac{2837835}{4194304}
,\ \frac{66891825}{33554432}.
$$

To make the forthcoming notation less cumbersome, let us define, in view of Lemma \ref{AsymptoticBessel},
 
$$\mathfrak{J}_n(r):=\left(\frac{\pi r}{2}\right)^{1/2}J_n(r).$$
To avoid writing many fractions, we further define $t:=(16r)^{-1}$. We also set 
 $$\c:=\cos(r-\pi/4) \textrm{ and } \s:=\sin(r-\pi/4).$$

From Lemma \ref{AsymptoticBessel} and Corollary \ref{uniformboundsJ0J1}, we have that
$$
\mathfrak{J}_0(r)
\sim (\c)+(2t\s)+(-18t^2 \c)+ (-300 t^3 \s)+(7350t^4\c)+(238140 t^5\s)
$$
with remainders discussed there as 

$$
\frac 9 8,\ 2t,\ 
18t^2,\ 
300t^3,\ 
7350 t^4, \ 
238140 t^5, \
9604980 t^6.
$$
In a similar way,
$$
\mathfrak{J}_1(r)
\sim(\s)+ (6 t \c) +(30t^2 \s)+ (-420 t^3 \c)
+(-9450 t^4\s)
+(291060 t^5\c)
$$
with remainders

$$\frac {11} 8 ,\ 
6t,\ 
30t^2,\ 
420 t^3,\
9450t^4, \ 
291060t^5, \
11351340 t^6.
$$
Applying the product formula,
we obtain successively 
\begin{multline*}
\mathfrak{J}_0^2(r)
\sim (\c^2) +(4t \c\s) +(-36t^2\c^2+4 t^2 \s^2) +(-672 t^3 \c\s)+\\
+(15024t^4\c^2-1200t^4\s^2)+(516480 t^5 \c\s)
\end{multline*}
with remainders 

$$\frac{81}{64},\ 
\frac{17}{4}t, \ 
\frac{169}{4}t^2,\
\frac{1419}{2}t^3,\
\frac{68571}{4} t^4, \ 
\frac{1092495}{2} t^5, \
\frac{43435485}{2} t^6,
$$
and 
\begin{align}
\mathfrak{J}_0^3(r) \sim 
\mathfrak{J}_{000}(r):=&
 (\c^3)+(6t\c^2\s)
+(-54 t^2 \c^3+12 t^2 \c\s^2)
+(-1116 t^3\c^2\s+8t^3\s^3)\label{j0third}\\
&+(23022 t^4 \c^3-3816 t^4 \c\s^2)
+(836964 t^5 \c^2\s-3600 t^5\s^3)
\notag
\end{align}
with remainders

$$\frac{729}{512}, \
\frac{217}{32} t,\
\frac{2353}{32}t^2,\
\frac{20003}{16}t^3,\ 
\frac{956787}{32} t^4, \
\frac{15017799}{16} t^5, \ 
\frac{588969477}{16} t^6.
$$
In particular,

\begin{equation}\label{Approx1}
|\mathfrak{J}_0^3(r)-\mathfrak{J}_{000}(r)|
\leq 
\frac{588969477}{16}  (16 r)^{-6}.
\end{equation}
On the other hand, we have 

\begin{multline*}
\mathfrak{J}_1^2(r) \sim (\s^2)+(12t\c\s)+(36t^2\c^2+60 t^2 \s^2)+(-480 t^3 \c\s)+\\
+(-5040 t^4 \c^2 - 18000 t^4 \s^2)
+(443520 t^5 \c \s)
\end{multline*}
with remainders 
$$\frac{121}{64},\ 
\frac{57}{4}t,\  
\frac{429}{4} t^2,\ 
\frac{2715}{2} t^3,\ 
\frac{113535}{4} t^4,\ 
\frac{1659735}{2} t^5,\
\frac{62391105}{2} t^6,
$$
and 

\begin{align}
(\mathfrak{J}_1^2\mathfrak{J}_0)(r)\sim 
\mathfrak{J}_{110}(r):=&(\c\s^2)+(12t\c^2\s+2t\s^3) 
+(36t^2\c^3+66 t^2\c \s^2)+(-624t^3\c^2\s-180 t^3 \s^3)\label{j0j1squared}\\
&+(-5688 t^4 \c^3 - 16290 t^4 \c \s^2)+(519480 t^5 \c^2 \s + 184140 t^5 \s^3)\notag
\end{align}
with remainders 
$$\frac{1089}{512},\ 
\frac{577}{32}t,\  
\frac{5433}{32} t^2,\ 
\frac{38331}{16} t^3,\ 
\frac{1638411}{32} t^4,\ 
\frac{23971455}{16} t^5,\
\frac{897834285}{16} t^6.
$$
In particular,

\begin{equation}\label{Approx2}
|(\mathfrak{J}_1^2\mathfrak{J}_0)(r)-\mathfrak{J}_{110}(r)|
\leq 
\frac{897834285}{16} (16 r)^{-6}.
\end{equation}
Inequalities \eqref{Approx1} and \eqref{Approx2} are at the core of the following result, the proof of which does not require $m$ to be even, nor $m\leq n$.
\vspace{.3cm}

\noindent {\bf Estimate A.}
{\it For $n\geq n_0= 20$ and  $m\geq 0$, we have
\begin{equation}\label{AEstOne}
\Big|\int_0^\infty J_{n+m} J_n \mathfrak{J}_m (\mathfrak{J}_0^3-\mathfrak{J}_{000}) r^{-1} dr\Big|
\leq 0.74 n_0^{-1/2}(n+m)^{-6},
\end{equation}

\begin{equation}\label{AEstTwo}
\Big|\int_0^\infty J_{n+m} J_n \mathfrak{J}_m (\mathfrak{J}_1^2\mathfrak{J}_0-\mathfrak{J}_{110}) r^{-1} dr\Big|
\leq 1.12 n_0^{-1/2}(n+m)^{-6}.
\end{equation}}

\begin{proof}[Proof of Estimate A]
Using the Cauchy-Schwarz inequality and Lemmata \ref{kapteynlemma} and  \ref{WeberSchafheitlin} to compute the integrals, we have that

\begin{align*}
\int_0^\infty |J_n J_{n+m}\mathfrak{J}_m|r^{-7}{dr}
&\leq \Big(\frac{\pi}{2}\Big)^{1/2}
\Big(\int_0^\infty J_n^2\frac{dr}{r}\Big)^{1/2} \Big( \int_0^\infty J_{n+m}^2 \frac{dr}{r^{12}}\Big)^{1/2}\\
&=
\Big(\frac{1}{2n}\Big)^{1/2}\Big(\frac{128}{693}\frac{\Gamma(n+m-\frac{11}{2})}{\Gamma(n+m+\frac{13}{2})}\Big)^{1/2}\\
&\le a_0^{1/2}\Big(\frac{64}{693 n}\Big)^{1/2}(n+m)^{-6}, 
\end{align*}
 where

$$a_0
:=\sup_{\ell\geq n_0} \frac{\Gamma(\ell-\frac{11}2)}{\Gamma(\ell+\frac{13}2)} \ell^{12}
=\frac{\Gamma(\frac{29}2)}{\Gamma(\frac{53}2)}20^{12}
\leq 1.21.$$

\noindent Thus, for $n\ge n_0$, the left-hand side of inequality \eqref{AEstOne} is bounded by

$$\le \frac{588969477/16}{16^6} a_0^{1/2} \Big(\frac{64}{693 n}\Big)^{1/2}(n+m)^{-6} 
\le \alpha n_0^{-1/2}(n+m)^{-6},$$
where 
$$\alpha= \frac{588969477/16}{16^6} a_0^{1/2} \Big(\frac{64}{693 }\Big)^{1/2}\leq 0.74.$$
In a similar way, for $n\ge n_0$, the left-hand side of inequality \eqref{AEstTwo} is bounded by

$$\le \frac{897834285/16}{16^6} a_0^{1/2} \Big(\frac{64}{693 n}\Big)^{1/2}(n+m)^{-6} 
\le \beta n_0^{-1/2}(n+m)^{-6},$$
where 
$$\beta= \frac{897834285/16}{16^6} a_0^{1/2} \Big(\frac{64}{693 }\Big)^{1/2}\leq 1.12.$$
\end{proof}


\section{Part II. Expanding $J_m$}\label{sec:Part2}

Start by noting that $\cos\omega_m=(-1)^{m/2}\c$ and $\sin\omega_m=(-1)^{m/2}\s$ since $m$ is an even integer. 
We have that
\begin{align}\label{asymptJm}
\mathfrak{J}_m(r)-\rho_m(r)
=(-1)^{m/2}\Big(\c\sum_{k=0}^{\frac{m}{2}+1} (-1)^k (16 t)^{2k} a_{2k}(m)
-\s \sum_{k=0}^{\frac{m}{2}+1} (-1)^k(16t)^{2k+1} a_{2k+1}(m)\Big),
\end{align}
where the error term $\rho_m$ is implicitly defined by this identity.
From Lemma \ref{AsymptoticBessel} we know that
\begin{equation}\label{remainderRm}
|\rho_m(r)|\leq \frac{|a_{m+4}(m)|}{r^{m+4}}
\end{equation}
 for every $r\geq 0$.
The main integrals, which are the subject of the next section, arise from replacing 
$\mathfrak{J}_m\mathfrak{J}_{000}$ by 

\begin{equation}\label{DefJm000}
\mathfrak{J}_{m000} := (\mathfrak{J}_m-\rho_m) \mathfrak{J}_{000},
\end{equation}
and $\mathfrak{J}_m\mathfrak{J}_{110}$ by 

\begin{equation}\label{DefJm110}
\mathfrak{J}_{m110} := (\mathfrak{J}_m-\rho_m) \mathfrak{J}_{110}.
\end{equation}

\noindent Estimating the error term in this replacement is the main goal of the present section. We formulate it as
 \vspace{.3cm}
 
\noindent {\bf  Estimate B.}
{\it Let $n\geq n_0= 20$ and  $m$ be even.

If $m=0$, then}
 
\begin{equation}\label{m0B0}
\Big|\int_0^\infty J_{n}^2 (\mathfrak{J}_0\mathfrak{J}_{000}-\mathfrak{J}_{0000}) r^{-1}dr\Big|
\leq
0.022 n_0^{-1} n^{-4},
\end{equation}

\begin{equation}\label{m0B1}
\Big|\int_0^\infty J_{n}^2 (\mathfrak{J}_0\mathfrak{J}_{110}-\mathfrak{J}_{0110}) r^{-1}dr\Big|
\leq
0.023 n_0^{-1} n^{-4},
\end{equation}

{\it If $m=2$, then}

\begin{equation}\label{m2B0}
\Big|\int_0^\infty J_{n+2} J_n (\mathfrak{J}_2\mathfrak{J}_{000}-\mathfrak{J}_{2000}) r^{-1}dr\Big|
\leq
0.162 n_0^{-1}n^{-6},
\end{equation}

\begin{equation}\label{m2B1}
\Big|\int_0^\infty J_{n+2} J_n (\mathfrak{J}_2\mathfrak{J}_{110}-\mathfrak{J}_{2110}) r^{-1}dr\Big|
\leq
0.166 n_0^{-1}n^{-6},
\end{equation}

{\it If $m=4$, then}

\begin{equation*}
\Big|\int_0^\infty J_{n+4} J_n (\mathfrak{J}_4\mathfrak{J}_{000}-\mathfrak{J}_{4000}) r^{-1}dr\Big|
\leq
2.823 n_0^{-1}n^{-8},
\end{equation*}

\begin{equation*}
\Big|\int_0^\infty J_{n+4} J_n (\mathfrak{J}_4\mathfrak{J}_{110}-\mathfrak{J}_{4110}) r^{-1}dr\Big|
\leq
2.885 n_0^{-1}n^{-8},
\end{equation*}

{\it If $6\leq m\leq n$, then}

\begin{equation}\label{m6B0}
\Big|\int_0^\infty J_{n+m} J_n (\mathfrak{J}_m\mathfrak{J}_{000}-\mathfrak{J}_{m000}) r^{-1}dr\Big|
\leq
0.015 n_0^{-1} n^{-4},
\end{equation}

\begin{equation}\label{m6B1}
\Big|\int_0^\infty J_{n+m} J_n (\mathfrak{J}_m\mathfrak{J}_{110}-\mathfrak{J}_{m110}) r^{-1}dr\Big|
\leq
0.015 n_0^{-1} n^{-4}.
\end{equation}

\begin{proof}[Proof of  Estimate B]
We consider the case $m=0$ first. 
\noindent The left-hand side of \eqref{m0B0} is bounded by

\begin{equation}\label{Error1}
|a_{4}(0)| \int_0^\infty J_n^2(r)
(1+6t+66t^2+1124t^3+26838t^4+840564t^5)  r^{-5}dr,
\end{equation}
whereas the left-hand side of \eqref{m0B1} is bounded by

\begin{equation}\label{Error2}
|a_{4}(0)| \int_0^\infty J_n^2(r)
(1+14t+102t^2+804t^3+21978t^4+703620t^5)  r^{-5}dr. 
\end{equation}
For $\ell\in\{5,6,\ldots,10\}$, Lemma \ref{WeberSchafheitlin} implies that

$$\int_0^\infty J_n^2(r) r^{-\ell}dr=\frac{2^{-\ell}\Gamma(\ell)}{\Gamma(\frac{\ell+1}2)^2}\frac{\Gamma(n-\frac{\ell-1}2)}{\Gamma(n+\frac{\ell+1}2)}.$$
This can be estimated as follows:

$$\int_0^\infty J_n^2(r) r^{-\ell}dr\leq
c_\ell^{(0)} n^{-\ell},$$
where 

$$c_\ell^{(0)}:=\frac{2^{-\ell}\Gamma(\ell)}{\Gamma(\frac{\ell+1}2)^2} \frac{\Gamma(20-\frac{\ell-1}2)}{\Gamma(20+\frac{\ell+1}2)} 20^\ell.$$
In particular, for  $\ell\in \{5,6,\ldots,10\}$, one can easily check that

$$0.14\leq c_\ell^{(0)}\leq 0.19.$$
It follows that the upper bounds for each of the last five summands on the right-hand side of inequalities \eqref{Error1} and \eqref{Error2} can be estimated by a small fraction of the upper bound for the first summand. Quantifying this, one obtains
\eqref{m0B0} and \eqref{m0B1}, respectively.

We consider the case $m=2$ next. Again for $\ell\in\{5,6,\ldots,10\}$, the integral to consider is the following:

$$\int_0^\infty |J_n(r)J_{n+2}(r)| r^{-2-\ell}dr.$$
In view of the absolute value in the integrand, this cannot be computed directly with Lemma \ref{WeberSchafheitlin}. Instead, we use the Cauchy-Schwarz inequality to estimate

\begin{align*}
\int_0^\infty |J_nJ_{n+2}| r^{-2-\ell}dr
&\leq
\Big(\int_0^\infty J_n^2 \frac{dr}r\Big)^{1/2}
\Big(\int_0^\infty J_{n+2}^2 \frac{dr}{r^{2\ell+3}}\Big)^{1/2}\\
&= \Big(\frac1{2n}\Big)^{1/2}\Big(\frac{2^{-(2\ell+3)}\Gamma(2\ell+3)}{\Gamma(\ell+2)^2}\frac{\Gamma(n-\ell+1)}{\Gamma(n+\ell+4)}\Big)^{1/2},
\end{align*}
where the last identity is a consequence of  Lemmata \ref{kapteynlemma} and \ref{WeberSchafheitlin}.
Reasoning as before, we derive the estimate

$$\int_0^\infty |J_nJ_{n+2}| r^{-2-\ell}dr
\leq
c_\ell^{(2)} n^{-2-\ell},
$$
where 
$$c_\ell^{(2)}:=\Big(\frac{2^{-(2\ell+3)}\Gamma(2\ell+3)}{2\Gamma(\ell+2)^2} \frac{\Gamma(20-\ell+1)}{\Gamma(20+\ell+4)} 20^{2\ell+3}\Big)^{1/2}.$$
In particular, for every $\ell\in \{5,6,\ldots,10\}$, one checks that

$$0.11\leq c_\ell^{(2)}\leq0.15.$$
As before, it follows that the last five summands on the right-hand sides of estimates

\begin{multline*}
\Big|\int_0^\infty J_{n+2}J_n (\mathfrak{J}_2\mathfrak{J}_{000}-\mathfrak{J}_{2000}) r^{-1}dr\Big|
\leq\\
|a_6(2)|
\Big(c_5^{(2)}n^{-7}+\frac6{16} c_6^{(2)}n^{-8}+\frac{66}{16^2} c_7^{(2)}n^{-9}+\frac{1124}{16^3} c_8^{(2)}n^{-10}+\frac{26838}{16^4} c_9^{(2)}n^{-11}+\frac{840564}{16^5} c_{10}^{(2)}n^{-12}\Big)
\end{multline*}
and 

\begin{multline*}
\Big|\int_0^\infty J_{n+2}J_n (\mathfrak{J}_2\mathfrak{J}_{110}-\mathfrak{J}_{2110}) r^{-1}dr\Big|
\leq\\
|a_6(2)|
\Big(c_5^{(2)}n^{-7}+\frac{14}{16} c_6^{(2)}n^{-8}+\frac{102}{16^2} c_7^{(2)}n^{-9}+\frac{804}{16^3} c_8^{(2)}n^{-10}+\frac{21978}{16^4} c_9^{(2)}n^{-11}+\frac{703620}{16^5} c_{10}^{(2)}n^{-12}\Big)\end{multline*}
can be bounded by a small fraction of the first summand. Quantifying this yields \eqref{m2B0} and \eqref{m2B1}.

The cases $m=4,6,8,10$ can be treated in a completely analogous way to what was done for $m=2$. We omit the details, but remark that for $m=6,8,10$ this method produces estimates which are stronger than \eqref{m6B0} and \eqref{m6B1}. However, the latter will be enough for our purposes.

Finally, we deal with the case of even $m\geq 12$.  Again for $\ell\in\{5,6,\ldots,10\}$, the integral to consider is bounded by
$$\int_0^\infty |J_{n}J_{n+m}| r^{-m-\ell}dr
\leq
\Big(\frac 1{2n}\Big)^{1/2}\Big(\frac{2^{-(2m+2\ell-1)}\Gamma(2m+2\ell-1)}{\Gamma(m+\ell)^2}\frac{\Gamma(n-\ell+1)}{\Gamma(n+2m+\ell)}\Big)^{1/2}.$$
Identifying a binomial coefficient, we notice the trivial bound
 
 $$\frac{2^{-(2m+2\ell-1)}\Gamma(2m+2\ell-1)}{\Gamma(m+\ell)^2}\leq \frac 12.$$
 It follows that
 
 $$\int_0^\infty |J_{n}J_{n+m}| r^{-m-\ell}dr
 \leq
 \frac 12 n^{-1/2}\Big(\frac{\Gamma(n-\ell+1)}{\Gamma(n+2m+\ell)}\Big)^{1/2}.
 $$
To handle the coefficients $|a_{m+4}(m)|$, we recall Lemma \ref{ajnEstimate}. For even $m\geq 12$, define 

$$c^{(m)}_\ell:=\frac1 2\sqrt{\frac 2\pi} (2m-1)^{-1/2} 2^m e^{-m} \Big(\frac {20^{2\ell-1}}{\prod_{k=0}^{2\ell-2} (20-\ell+1+k)}\Big)^{1/2},$$
a decreasing function of $m$ for fixed $\ell$.
We finally arrive at 

\begin{multline*}
\Big|\int_0^\infty J_{n+m}J_n (\mathfrak{J}_m\mathfrak{J}_{000}-\mathfrak{J}_{m000}) r^{-1}dr\Big|
\leq\\
\frac{105}{16}\Big(c^{(m)}_{5}n^{-5}+\frac 6{16}c^{(m)}_{6}n^{-6}+\frac{66}{16^2}c^{(m)}_{7}n^{-7}+\frac{1124}{16^3}c^{(m)}_{8}n^{-8}+\frac{26828}{16^4}c^{(m)}_{9}n^{-9}+\frac{840564}{16^5}c^{(m)}_{10}n^{-10}\Big)
\end{multline*}
and 

\begin{multline*}
\Big|\int_0^\infty J_{n+m}J_n (\mathfrak{J}_m\mathfrak{J}_{110}-\mathfrak{J}_{m110}) r^{-1}dr\Big|
\leq\\
\frac{105}{16}\Big(c^{(m)}_{5}n^{-5}+\frac {14}{16}c^{(m)}_{6}n^{-6}+\frac{102}{16^2}c^{(m)}_{7}n^{-7}+\frac{804}{16^3}c^{(m)}_{8}n^{-8}+\frac{21978}{16^4}c^{(m)}_{9}n^{-9}+\frac{703620}{16^5}c^{(m)}_{10}n^{-10}\Big).
\end{multline*}
If $m\geq 12$, then both of these expressions are bounded by 
$ 0.015 n_0^{-1} n^{-4}$, as claimed.
\end{proof}


\section{Part III. The core integrals}\label{sec:Part3}

The main integrals that are left to analyze,

$$\mu_0=\mu_0(m,n):=\int_0^\infty J_n(r)J_{n+m}(r)\mathfrak{J}_{m000}(r) r^{-1} dr;$$
$$\mu_1=\mu_1(m,n):=\int_0^\infty J_n(r)J_{n+m}(r)\mathfrak{J}_{m110}(r) r^{-1} dr,$$
decompose into the core integrals \eqref{mainintegrals} by means
of expanding $\mathfrak{J}_{m000}$ and $\mathfrak{J}_{m110}$ using the following elementary trigonometric
facts:
\begin{align*}
&8\c^{4}=-\cos(4r)+4\sin(2r)+3, \\
&8\c^{3}\s=-\sin(4r)-2\cos(2r), \\
&8\c^{2}\s^2=\cos(4r)+1, \\
&8\c\s^3=\sin(4r)-2\cos(2r), \\
&8\s^{4}=-\cos(4r)-4\sin(2r)+3. 
\end{align*}
These identities can be readily checked recalling the definitions  $\c=\cos(r-\pi/4)$ and $\s=\sin(r-\pi/4)$, and noting again that $\cos\omega_m=(-1)^{m/2}\c$ and 
$\sin\omega_m=(-1)^{m/2}\s$.

 A simple parity check
verifies that the resulting core integrals with 
$\cos(2r)$ and $\sin(2r)$ satisfy the parity assumption
of Lemma \ref{2rlemma} relative to the powers of
$r$, and so these terms yield zero contribution. 
It therefore suffices  to consider the constant terms
and the terms involving $\cos(4r)$ and $\sin(4r)$.
The strategy will be to split the main integrals

\begin{align*}
\mu_0&=\mu_0^{(\cos)}+\mu_0^{(\sin)}; \\ 
\mu_1&=\mu_1^{(\cos)}+\mu_1^{(\sin)},
\end{align*}
 according to cosine and sine contributions. 
More precisely, recall definitions \eqref{DefJm000} and \eqref{DefJm110} for $\mathfrak{J}_{m000} $ and $\mathfrak{J}_{m110} $, respectively. The first factor in each of them, namely $\mathfrak{J}_m-\rho_m$, is given by identity \eqref{asymptJm}. The right-hand side of this identity consists of two sums which come affected by a coefficient of  $\mathfrak{c}$ and $\mathfrak{s}$. These are at the source of what we denote by cosine and sine contributions, respectively. Working out the algebra, one is led to define

\begin{multline}\label{MuCos}
\mu_*^{(\cos)}:=\frac{(-1)^{m/2}}{8}\sum_{k=0}^{\frac{m}{2}+1} (-1)^k a_{2k}(m)
\int_0^\infty J_n(r)J_{n+m}(r)
\cdot\Big[ (\alpha_0 +\alpha_2 t^2 +\alpha_4 t^4) +\\
+(\beta_0+\beta_2 t^2+\beta_4 t^4)\cos(4 r) +(\gamma_1 t+\gamma_3 t^3+\gamma_5 t^5)\sin(4 r)\Big] r^{-2k-1}  dr
\end{multline}
for the sequence of coefficients given by

\begin{equation}
(\alpha_0,\alpha_2,\alpha_4)= \begin{cases}
    (3,-150,65250), & \textrm{if $*=0$},\\
   (1,174,-33354), & \textrm{if $*=1$};
  \end{cases} \label{eq:alphasCos}
\end{equation}

\begin{equation}
(\beta_0,\gamma_1,\beta_2,\gamma_3,\beta_4,\gamma_5)= \begin{cases}
   (-1,-6,66,1124,-26838,-840564), & \textrm{if $*=0$},\\
   (1,-10,30,444,-10602,-335340), & \textrm{if $*=1$},
  \end{cases} \label{betasgammasCos}
\end{equation}
 and 

\begin{multline}\label{MuSin}
\mu_*^{(\sin)}:=-\frac{(-1)^{m/2}}{8}\sum_{k=0}^{\frac{m}{2}+1} (-1)^k a_{2k+1}(m)
\int_0^\infty J_n(r)J_{n+m}(r)
\cdot\Big[ (\alpha_1 t+\alpha_3 t^3 +\alpha_5 t^5) +\\
+(\beta_1 t+\beta_3 t^3+\beta_5 t^5)\cos(4 r) +(\gamma_0 +\gamma_2 t^2+\gamma_4 t^4)\sin(4 r)\Big] r^{-2k-2}  dr
\end{multline} 
 for the sequence of  coefficients  
  given by 
 
\begin{equation}
(\alpha_1,\alpha_3,\alpha_5)= \begin{cases}
   (6,-1092,826164), & \textrm{if $*=0$},\\
   (18,-1164,1071900), & \textrm{if $*=1$};
  \end{cases} \label{eq:alphasSin}
\end{equation}

\begin{equation}
(\gamma_0,\beta_1,\gamma_2,\beta_3,\gamma_4,\beta_5)= \begin{cases}
   (-1,6,66,-1124,-26838,840564), & \textrm{if $*=0$},\\
   (1,10,30,-444,-10602,335340), & \textrm{if $*=1$}.
  \end{cases} \label{betasgammasSin}
\end{equation}
For $*\in\{0,1\}$, the goal is to obtain a set of estimates of the form

\begin{align}
|\mu_*^{(\cos)}(m,n)-M_*^{(\cos)}(m,n)|&\leq E_{1,*}^{(\cos)}(m,n)+E_{2,*}^{(\cos)}(m,n);\label{EstMuCos}\\
|\mu_*^{(\sin)}(m,n)-M_*^{(\sin)}(m,n)|&\leq E_{1,*}^{(\sin)}(m,n)+E_{2,*}^{(\sin)}(m,n),\label{EstMuSin}
\end{align}
where $M$ and $E_1$ denote the main and error terms coming from the analysis of the constant terms, and $E_2$ denotes the error term coming from the analysis of the terms of frequency $4r$. We shall denote $E_1$ and $E_2$ by error terms of the first and second kind, respectively.

\subsection{Constant terms}
Everything originating from the constant terms can be explicitly computed, one just needs to be careful about bookkeeping. As indicated before, we organize the terms into cosine and sine contributions. 
\subsubsection{Cosine contributions}

We split the analysis in four cases: $m=0$, $m=2$, $m=4$ and $m\geq 6$.
In each of these four cases we will identify, as announced, a main term $M$ and an error term $E_1$.

Let us start by handling the case $m=0$. In this case, the  contribution coming from the non-oscillatory term $\alpha_0+\alpha_2 t^2+\alpha_4 t^4$ in \eqref{MuCos} can by computed exactly with the help of Lemmata \ref{kapteynlemma} and \ref{WeberSchafheitlin}, the result being a main term 

$$M_*^{(\cos)}(0,n):=\frac{1}{8}\alpha_0\frac{1}{2n}+\frac 1 8\Big(a_0(0)\frac{\alpha_2}{16^2}-a_2(0)\alpha_0\Big)\frac{1}{4(n-1)n(n+1)},$$
and an error term of the first kind

$$
E_{1,*}^{(\cos)}(0,n):=\frac 1 8\Big(a_0(0)\frac{\alpha_4}{16^4}-a_2(0)\frac{\alpha_2}{16^2}\Big)\frac{3\Gamma(n-2)}{16 \Gamma(n+3)}
+\frac 1 8\Big(-a_2(0)\frac{\alpha_4}{16^4}\Big)\frac{5\Gamma(n-3)}{32\Gamma(n+4)}.
$$
 Recalling \eqref{eq:alphasCos}, the main term can be computed as follows:
 \begin{equation}\label{MainCos0}
M_{*}^{(\cos)}(0,n)= \begin{cases}
   \frac 3{16n}-\frac{51}{2048 (n-1)n(n+1)}, & \textrm{if $*=0$},\\
   \frac 1{16n}+\frac{39}{2048 (n-1)n(n+1)}, & \textrm{if $*=1$}.
  \end{cases} 
\end{equation}
 To estimate the error term, we first compute it as

\begin{equation*}
E_{1,*}^{(\cos)}(0,n)= \begin{cases}
    \frac{101925\Gamma(n-2)}{4194304\Gamma(n+3)}-\frac{1468125\Gamma(n-3)}{1073741824\Gamma(n+4)} , & \textrm{if $*=0$},\\
    -\frac{54729\Gamma(n-2)}{4194304\Gamma(n+3)}+\frac{750465\Gamma(n-3)}{1073741824\Gamma(n+4)}, & \textrm{if $*=1$}.
  \end{cases} 
\end{equation*}
Using the triangle inequality together with the easily verified bounds
$$\frac{1}{n^5}\leq \frac{\Gamma(n-2)}{\Gamma(n+3)} \leq\frac{1.02}{n^5} 
\textrm{ and }
\frac{1}{n^7}\leq \frac{\Gamma(n-3)}{\Gamma(n+4)} \leq\frac{1.04}{n^7},$$
valid for $n\geq 20$, one arrives at

\begin{equation}\label{E1Cos0}
|E_{1,*}^{(\cos)}(0,n)|\leq \begin{cases}
    0.026 n_0^{-1} n^{-4}, & \textrm{if $*=0$},\\
    0.015 n_0^{-1} n^{-4}, & \textrm{if $*=1$}.
  \end{cases} 
\end{equation}

We move on to the case $m=2$. Orthogonality kicks in the form of Lemma \ref{WeberSchafheitlin} to ensure that we only have one main term, which the same lemma  computes as 

$$M_*^{(\cos)}(2,n):=\frac{1}{8}\Big(-a_0(2)\frac{\alpha_2}{16^2}+a_2(2)\alpha_0\Big)\frac{1}{8n(n+1)(n+2)}.$$
In other words,
 \begin{equation}\label{MainCos2}
M_{*}^{(\cos)}(2,n)= \begin{cases}
   \frac{195}{4096 n(n+1)(n+2)}, & \textrm{if $*=0$},\\
   \frac{9}{4096 n(n+1)(n+2)}, & \textrm{if $*=1$}.
  \end{cases} 
\end{equation}
The error term of the first kind is now given by

\begin{align*}
E_{1,*}^{(\cos)}(2,n):=
&\frac{1}{8}\Big(-a_0(2)\frac{\alpha_4}{16^4}+a_2(2)\frac{\alpha_2}{16^2}-a_4(2){\alpha_0}\Big)\frac{\Gamma(n-1)}{8\Gamma(n+4)}\\
&+\frac{1}{8}\Big(a_2(2)\frac{\alpha_4}{16^4}-a_4(2)\frac{\alpha_2}{16^2}\Big)\frac{15\Gamma(n-2)}{128\Gamma(n+5)}+\frac 1 8 \Big(-a_4(2)\frac{\alpha_4}{16^4}\Big)\frac{7\Gamma(n-3)}{64\Gamma(n+6)}.
\end{align*}
Proceeding as in the case $m=0$, we see that this term obeys the following estimate:

\begin{equation}\label{E1Cos2}
|E_{1,*}^{(\cos)}(2,n)|\leq \begin{cases}
    0.039 n_0^{-1} n^{-4}, & \textrm{if $*=0$},\\
    0.012 n_0^{-1} n^{-4}, & \textrm{if $*=1$}.
  \end{cases} 
\end{equation}

In the case $m=4$, we expand to one higher order. The reason for this will become apparent at the end of Section \ref{sec:numerical}. We thus have exactly one main term

$$M_*^{(\cos)}(4,n):=\frac{1}{8}\Big(a_{0}(4)\frac{\alpha_4}{16^4}-a_{2}(4)\frac{\alpha_2}{16^2}+a_4(4)\alpha_0\Big)\frac{1}{32 n(n+1)(n+2)(n+3)(n+4)},$$
and an error term of the first kind

\begin{align*}
E_{1,*}^{(\cos)}(4,n):=
&\frac{1}{8}\Big(-a_{2}(4)\frac{\alpha_4}{16^4}+a_4(4)\frac{\alpha_2}{16^2}-a_{6}(4)\alpha_0\Big)\frac{3\Gamma(n-1)}{64 \Gamma(n+6)}\\
&+\frac{1}{8}\Big(a_4(4)\frac{\alpha_4}{16^4}-a_{6}(4)\frac{\alpha_2}{16^2}\Big)\frac{7\Gamma(n-2)}{128 \Gamma(n+7)}+\frac{1}{8}\Big(-a_{6}(4)\frac{\alpha_4}{16^4}\Big)\frac{15\Gamma(n-3)}{256 \Gamma(n+8)}.
\end{align*} 
As before, we compute the main term
 
 \begin{equation}\label{MainCos4}
M_{*}^{(\cos)}(4,n)= \begin{cases}
   \frac{322425}{1048576 n(n+1)(n+2)(n+3)(n+4)}, & \textrm{if $*=0$},\\
   \frac{7011}{1048576 n(n+1)(n+2)(n+3)(n+4)}, & \textrm{if $*=1$},
  \end{cases} 
\end{equation}
and verify the following bounds for the error term:

\begin{equation}\label{E1Cos4}
|E_{1,*}^{(\cos)}(4,n)|\leq \begin{cases}
    0.42 n_0^{-1} n^{-6}, & \textrm{if $*=0$},\\
    0.11 n_0^{-1} n^{-6}, & \textrm{if $*=1$}.
  \end{cases} 
\end{equation}

If $m\geq 6$, orthogonality ensures that there is no main term. The error term of the first kind is  given by
\begin{align*}
E_{1,*}^{(\cos)}(m,n):=
&\frac{1}{8}\Big(a_{m-4}(m)\frac{\alpha_4}{16^4}-a_{m-2}(m)\frac{\alpha_2}{16^2}+a_m(m)\alpha_0\Big)\frac{\Gamma(n)}{2^{m+1} \Gamma(n+m+1)}\\
&+\frac{1}{8}\Big(-a_{m-2}(m)\frac{\alpha_4}{16^4}+a_m(m)\frac{\alpha_2}{16^2}-a_{m+2}(m)\alpha_0\Big)\frac{(m+2)\Gamma(n-1)}{2^{m+3} \Gamma(n+m+2)}\\
&+\frac{1}{8}\Big(a_m(m)\frac{\alpha_4}{16^4}-a_{m+2}(m)\frac{\alpha_2}{16^2}\Big)\frac{(m+3)(m+4)\Gamma(n-2)}{2^{m+6} \Gamma(n+m+3)}\\
&+\frac{1}{8}\Big(-a_{m+2}(m)\frac{\alpha_4}{16^4}\Big)\frac{(m+4)(m+5)(m+6)\Gamma(n-3)}{3\cdot 2^{m+8} \Gamma(n+m+4)},
\end{align*} 
and this can be crudely bounded in the following way.  If $m\geq 6$, then

\begin{equation}\label{monotonicity}
|E_{1,*}^{(\cos)}(m,n)|\leq |E_{1,*}^{(\cos)}(6,n)|
\end{equation}
 for the given range of admissible $m$ and $n$.
It is easy to see that inequality \eqref{monotonicity} holds if $m$ is large enough, essentially because each of the summands that constitute the left-hand side of that inequality is of order at most $n^{-(m+1)}$. For the remaining cases, one checks it directly. The upshot is a bound of the form 

\begin{equation}\label{E1Cosm}
|E_{1,*}^{(\cos)}(m,n)|\leq \begin{cases}
    6.34 n_0^{-3} n^{-4}, & \textrm{if $*=0$},\\
    0.09 n_0^{-3} n^{-4}, & \textrm{if $*=1$},
  \end{cases} 
\end{equation}
valid for every even $m\geq 6$.

\subsubsection{Sine contributions}
We proceed similarly, again splitting the analysis into four cases. 

If $m=0$, then the contribution coming from $\alpha_1 t+\alpha_3 t^3+\alpha_5 t^5$ amounts to a main term
$$M_*^{(\sin)}(0,n):=-\frac{1}{8}a_1(0)\frac{\alpha_1}{16}\frac{1}{4(n-1)n(n+1)},$$
and an error term of the first kind

\begin{align*}
-E_{1,*}^{(\sin)}(0,n):=
&\frac{1}{8}\Big(a_{1}(0)\frac{\alpha_3}{16^3}-a_{3}(0)\frac{\alpha_1}{16}\Big)\frac{3\Gamma(n-2)}{16\Gamma(n+3)}\\
&+\frac{1}{8}\Big(a_{1}(0)\frac{\alpha_5}{16^5}-a_3(0)\frac{\alpha_3}{16^3}\Big)\frac{5\Gamma(n-3)}{32\Gamma(n+4)}+\frac{1}{8}\Big(a_3(0)\frac{\alpha_5}{16^5}\Big)\frac{35\Gamma(n-4)}{256\Gamma(n+5)}.
\end{align*} 
Recalling \eqref{eq:alphasSin}, we compute the main term as

 \begin{equation}\label{MainSin0}
M_{*}^{(\sin)}(0,n)= \begin{cases}
   \frac{3}{ 2048(n-1)n(n+1)}, & \textrm{if $*=0$},\\
   \frac{9}{ 2048(n-1)n(n+1)}, & \textrm{if $*=1$}.
  \end{cases} 
\end{equation}
Arguing as in the last subsection, the error term can be seen to obey the following bounds:
\begin{equation}\label{E1Sin0}
|E_{1,*}^{(\sin)}(0,n)|\leq \begin{cases}
    0.0016 n_0^{-1} n^{-4}, & \textrm{if $*=0$},\\
    0.0030 n_0^{-1} n^{-4}, & \textrm{if $*=1$}.
  \end{cases} 
\end{equation}

If $m=2$, there is a main term

 \begin{equation}\label{MainSin2}
M_*^{(\sin)}(2,n):=\frac{1}{8}\Big(a_1(2)\frac{\alpha_1}{16}\Big)\frac{1}{8n(n+1)(n+2)}= \begin{cases}
   \frac{45}{ 4096 n(n+1)(n+2)}, & \textrm{if $*=0$},\\
   \frac{135}{ 4096 n(n+1)(n+2)}, & \textrm{if $*=1$},
  \end{cases} 
\end{equation}
and an error term of the first kind

\begin{align*}
-E_{1,*}^{(\sin)}(2,n):=
&\frac{1}{8}\Big(-a_{1}(2)\frac{\alpha_3}{16^3}+a_{3}(2)\frac{\alpha_1}{16}\Big)\frac{\Gamma(n-1)}{8\Gamma(n+4)}\\
&+\frac{1}{8}\Big(-a_{1}(2)\frac{\alpha_5}{16^5}-a_3(2)\frac{\alpha_3}{16^3}-a_5(2)\frac{\alpha_1}{16}\Big)\frac{15\Gamma(n-2)}{128\Gamma(n+5)}\\
&+\frac{1}{8}\Big(a_3(2)\frac{\alpha_5}{16^5}-a_5(2)\frac{\alpha_3}{16^3}\Big)\frac{7\Gamma(n-3)}{64\Gamma(n+6)}+\frac 1 8 \Big(-a_5(2)\frac{\alpha_5}{16^5}\Big)\frac{105\Gamma(n-4)}{1024\Gamma(n+7)}
\end{align*} 
which satisfies
\begin{equation}\label{E1Sin2}
|E_{1,*}^{(\sin)}(2,n)|\leq \begin{cases}
    0.0062 n_0^{-1} n^{-4}, & \textrm{if $*=0$},\\
    0.0031 n_0^{-1} n^{-4}, & \textrm{if $*=1$}.
  \end{cases} 
\end{equation}

If $m=4$, we again expand to one higher order. There is a main term

$$M_*^{(\sin)}(4,n):=-\frac{1}{8}\Big(a_{1}(4)\frac{\alpha_3}{16^3}-a_{3}(4)\frac{\alpha_1}{16}\Big)\frac{1}{32 n(n+1)(n+2)(n+3)(n+4)}$$

and an error term of the first kind

\begin{align*}
-E_{1,*}^{(\sin)}(4,n):=
&\frac{1}{8}\Big(a_{1}(4)\frac{\alpha_5}{16^5}-a_{3}(4)\frac{\alpha_3}{16^3}+a_{5}(4)\frac{\alpha_1}{16}\Big)\frac{3\Gamma(n-1)}{64 \Gamma(n+6)}\\
&+\frac{1}{8}\Big(-a_{3}(4)\frac{\alpha_5}{16^5}+a_{5}(4)\frac{\alpha_3}{16^3}-a_{7}(4)\frac{\alpha_1}{16}\Big)\frac{7\Gamma(n-2)}{128 \Gamma(n+7)}\\
&+\frac{1}{8}\Big(a_{5}(4)\frac{\alpha_5}{16^5}-a_{7}(4)\frac{\alpha_3}{16^3}\Big)\frac{15\Gamma(n-3)}{256 \Gamma(n+8)}+\frac 1 8\Big(-a_{7}(4)\frac{\alpha_5}{16^5}\Big)\frac{495\Gamma(n-4)}{8192\Gamma(n+9)}.
\end{align*} 
The main term satisfies

 \begin{equation}\label{MainSin4}
M_*^{(\sin)}(4,n):= \begin{cases}
   \frac{76167}{ 1048576 n(n+1)(n+2)(n+3)(n+4)}, & \textrm{if $*=0$},\\
   \frac{211869}{ 1048576 n(n+1)(n+2)(n+3)(n+4)}, & \textrm{if $*=1$},
  \end{cases} 
\end{equation}
and the error term can be bounded as follows:

\begin{equation}\label{E1Sin4}
|E_{1,*}^{(\sin)}(4,n)|\leq \begin{cases}
    0.086 n_0^{-1} n^{-6}, & \textrm{if $*=0$},\\
    0.063 n_0^{-1} n^{-6}, & \textrm{if $*=1$}.
  \end{cases} 
\end{equation}

Finally, if $m\geq 6$, there is no main term, and the error term of the first kind is given by

\begin{align*}
-E_{1,*}^{(\sin)}&(m,n):=
\frac{1}{8}\Big(-a_{m-5}(m)\frac{\alpha_5}{16^5}+a_{m-3}(m)\frac{\alpha_3}{16^3}-a_{m-1}(m)\frac{\alpha_1}{16}\Big)\frac{\Gamma(n)}{2^{m+1} \Gamma(n+m+1)}\\
&+\frac{1}{8}\Big(a_{m-3}(m)\frac{\alpha_5}{16^5}-a_{m-1}(m)\frac{\alpha_3}{16^3}+a_{m+1}(m)\frac{\alpha_1}{16}\Big)\frac{(m+2)\Gamma(n-1)}{2^{m+3} \Gamma(n+m+2)}\\
&+\frac{1}{8}\Big(-a_{m-1}(m)\frac{\alpha_5}{16^5}+a_{m+1}(m)\frac{\alpha_3}{16^3}-a_{m+3}(m)\frac{\alpha_1}{16}\Big)\frac{(m+3)(m+4)\Gamma(n-2)}{2^{m+6} \Gamma(n+m+3)}\\
&+\frac{1}{8}\Big(a_{m+1}(m)\frac{\alpha_5}{16^5}-a_{m+3}(m)\frac{\alpha_3}{16^3}\Big)\frac{(m+4)(m+5)(m+6)\Gamma(n-3)}{3\cdot 2^{m+8} \Gamma(n+m+4)}\\
&+\frac 1 8\Big(-a_{m+3}(m)\frac{\alpha_5}{16^5}\Big)\frac{(m+5)(m+6)(m+7)(m+8)\Gamma(n-4)}{3\cdot 2^{m+12}\Gamma(n+m+5)}.
\end{align*} 
Again the monotonicity formula

$$|E_{1,*}^{(\sin)}(m,n)|\leq |E_{1,*}^{(\sin)}(6,n)|$$
holds for every even $m\geq 6$, and this implies a bound of the form
\begin{equation}\label{E1Sinm}
|E_{1,*}^{(\sin)}(m,n)|\leq \begin{cases}
    1.49 n_0^{-3} n^{-4}, & \textrm{if $*=0$},\\
    4.08 n_0^{-3} n^{-4}, & \textrm{if $*=1$},
  \end{cases} 
\end{equation}
which is valid in that range of $m$.

\subsection{Frequency $4r$ terms}
To handle the terms of frequency $4r$, we make repeated use of the following result:

\begin{proposition}\label{Careful4rEstimate}
Let $n,m\in\N$ be such that $n\geq n_0= 20$ and $m$ even with $0\leq m\leq n$. Let $\alpha\in \{1,3,5\}$ and $\beta\in\{2,4,6\}$. Then each of the following quantities is less than $ n^{-1} 0.35^{n}$:

\begin{align*}
(i)  \quad &\sum_{k=0}^{\frac{m}{2}+1}  \left|\int_0^\infty J_n(r)J_{n+m}(r) 
r^{-2k}
a_{2k}(m)\cos(4r) r^{-\alpha} \, dr\right|,
\\
(ii) \quad  &\sum_{k=0}^{\frac{m}{2}+1}  \left|\int_0^\infty J_n(r)J_{n+m}(r) 
r^{-2k}
a_{2k}(m)\sin(4r) r^{-\beta} \, dr\right|,
\\
(iii) \quad &\sum_{k=0}^{\frac{m}{2}+1}  \left|\int_0^\infty J_n(r)J_{n+m}(r) 
r^{-2k-1}
a_{2k+1}(m)\cos(4r) r^{-\beta} \, dr\right|,
\\
(iv) \quad  &\sum_{k=0}^{\frac{m}{2}+1}  \left|\int_0^\infty J_n(r)J_{n+m}(r) 
r^{-2k-1}
a_{2k+1}(m)\sin(4r) r^{-\alpha} \, dr\right|.
\end{align*}
\end{proposition}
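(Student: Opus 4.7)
The plan is to invoke Lemma \ref{L2} on each summand, using that $|\cos(4r)|, |\sin(4r)| \le |e^{\pm 4ir}|$. For case (i), this will give
$$\Big|\int_0^\infty J_n(r)J_{n+m}(r) \cos(4r)\, r^{-2k-\alpha}\, dr\Big| \le \frac{2^{2k+\alpha-1}}{4^{2n+m}}\, \frac{(2n+m-2k-\alpha)!}{n!\,(n+m)!},$$
provided $1 \le 2k+\alpha < 2n+m$, which holds since $2k+\alpha \le m+7 \le 2n+m$ for $n \ge 20$ and $m \le n$. Analogous bounds handle (ii)--(iv), using that the estimate in Lemma \ref{L2} is equally valid with $e^{-4ir}$ in place of $e^{4ir}$ by complex conjugation. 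I would then isolate the binomial coefficient via $(2n+m-2k-\alpha)!/(2n+m)! \le (2n+m)^{-(2k+\alpha)}$ and apply Lemma \ref{GaussianBounds} with $x = n+m/2$, $d = m/2$, to obtain
$$\frac{\binom{2n+m}{n}}{4^{2n+m}} \le \frac{e^{1/24}}{2\sqrt{\pi}}\,(n+m/2)^{1/2}\, 2^{-(2n+m)}\, e^{-m^2/(4n+2m)}.$$
The factor $2^{-(2n+m)} = 4^{-n}\,2^{-m}$ alone already supplies the bulk of the required decay, since $4^{-n} \le 0.35^n$, and the additional piece $2^{-m} e^{-m^2/(4n+2m)}$ sharpens the bound as $m$ grows.

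Next I would handle the coefficients $|a_j(m)|$ for $j \in \{2k, 2k+1\}$, $k \le m/2+1$. For $j \le m-1$, the inequality $\Gamma(m+j+1/2)/\Gamma(m-j+1/2) \le (m+j)^{2j}$, read off from \eqref{coeffajn}, yields $|a_j(m)| \le (m+j)^{2j}/(j!\, 2^j)$; combined with the factor $2^{2k+\alpha}/(2n+m)^{2k+\alpha}$, the $k$-th summand becomes at most $[m^2/(2n+m)]^{2k}/(2k)!$ times the leading $k=0$ bound, so the summation over $k$ is controlled by a convergent series (bounded by $\cosh(m/2)$ or similar for $m \le n$). For the borderline indices $j \in \{m, m+1, m+2, m+3\}$, the denominator $\Gamma(m-j+1/2)$ is evaluated at a negative half-integer via \eqref{HalfIntegerGamma}, and $\Gamma(m+j+1/2)$ is handled with Stirling's formula, precisely as in the proof of Lemma \ref{ajnEstimate}.

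The main obstacle is precisely this boundary term $k = m/2+1$, where $|a_{m+2}(m)|$ and $|a_{m+3}(m)|$ grow of order $(2m/e)^m$ for large $m$. The saving mechanism is the combination of the factor $(2n+m)^{-(m+2)}$ supplied by Lemma \ref{L2} and the Gaussian factor $e^{-m^2/(4n+2m)}$ from Lemma \ref{GaussianBounds}; together they counteract the factorial growth. A crude bookkeeping of the worst case $m \sim n$ shows the total bound is of order $16^{-n}$ up to polynomial factors in $n$, which is comfortably smaller than $n^{-1}\cdot 0.35^n$ once $n \ge 20$; the threshold $0.35$ thus carries generous slack, making the numerical verification routine.
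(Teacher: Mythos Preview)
Your overall strategy---apply Lemma~\ref{L2} termwise, then pull out the central binomial via Lemma~\ref{GaussianBounds}---matches the paper's. But two steps in your sketch do not hold as written.

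First, the inequality $(2n+m-2k-\alpha)!/(2n+m)! \le (2n+m)^{-(2k+\alpha)}$ goes the wrong way: the left side equals $\prod_{i=0}^{2k+\alpha-1}(2n+m-i)^{-1}$, and each factor in that product is \emph{at most} $2n+m$, so the product is \emph{at least} $(2n+m)^{-(2k+\alpha)}$. A correct upper bound would replace $2n+m$ by the smallest factor, roughly $2n-7$, but this weakens the control on the $k$-sum. Second, your stated coefficient bound $|a_j(m)|\le (m+j)^{2j}/(j!\,2^j)$ does not yield the ratio $[m^2/(2n+m)]^{2k}/(2k)!$ you then claim; with $(m+2k)^{4k}$ in the numerator and $(2n-7)^{2k}$ in the denominator, the $k$-sum for $m$ close to $n$ behaves like $\cosh(cn)$ with $c$ too large to beat the target $0.35^n$ after multiplying by $2^{-(2n+m)}$. (The sharper estimate $|a_j(m)|\le m^{2j}/(j!\,2^j)$, which follows from pairing factors $(m+j-\tfrac12-i)(m-j+\tfrac12+i)\le m^2$, improves matters but still needs the factorial ratio handled correctly.)

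The paper avoids this by a different manoeuvre: it uses Corollary~\ref{GammaBeta} to trade $\Gamma(2n+m-2k)/\Gamma(2k+1)$ for $\Gamma(2n+m)/\Gamma(4k+1)$, which cleanly separates a single $k$-independent binomial $\Gamma(2n+m)/(\Gamma(n+1)\Gamma(n+m+1))$ from a $k$-sum $\sum_k \Gamma(m+2k+\tfrac12)/(\Gamma(m-2k+\tfrac12)\Gamma(4k+1))$. That $k$-sum is itself a sum of near-binomial quantities and is controlled by a \emph{second} application of Lemma~\ref{GaussianBounds}, leading to a Gaussian sum $\sum_k 4^k e^{-(3k-m/2)^2/(m/2+k+1)}$ whose maximum term is bounded by $A^{m+1}$ with $A=4^{(\log 2)/9}e^{-(\log 2)^2/9}\le 1.06$. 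This is the mechanism that produces the tight base $2^{1/3}A/4<0.35$; your direct summation does not capture it.
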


\begin{proof}
All estimates can be proved in a very similar way. We focus on the tightest case, that of $(i)$ with $\alpha=1$, and briefly comment on the other cases at the end of the proof.
Using the definition of the coefficients $a_j(n)$ and Lemma \ref{L2}, together with the convexity estimate from Corollary \ref{GammaBeta}, we obtain

\begin{align}
\sum_{k=0}^{\frac{m}{2}+1} |a_{2k}(m)| \Big|\int_0^\infty J_n&(r)J_{n+m}(r) 
r^{-2k}
\cos(4r) r^{-1} \, dr\Big| 
\leq\notag\\
&\leq \sum_{k=0}^{\frac{m}{2}+1}  \frac{\Gamma(m+2k+1/2)}{\Gamma(m-2k+1/2)2^{2k}\Gamma(2k+1)}
\frac{2^{2k}}{4^{2n+m}}\frac{\Gamma(2n+m-2k)}{\Gamma(n+1)\Gamma(n+m+1)}\notag\\
&\leq \sum_{k=0}^{\frac{m}{2}+1}  \frac{\Gamma(m+2k+1/2)}{\Gamma(m-2k+1/2)\Gamma(4k+1)}
\frac{\Gamma(2n+m)}{\Gamma(n+1)\Gamma(n+m+1)}4^{-2n-m}.\label{TwoFractionsBigSum}
\end{align}
The second fraction in this expression resembles a binomial coefficient and does not depend on $k$. It can be estimated in the following way:
 using Lemma \ref{GaussianBounds} with $x=n+m/2$ and $d=m/2$, we see that
\begin{align}
\frac{\Gamma(2n+m)}{\Gamma(n+1)\Gamma(n+m+1)}
&=\frac{1}{n(n+m)}\frac{\Gamma(2n+m)}{\Gamma(n)\Gamma(n+m)}\notag\\
&\leq \frac{1}{n(n+m)} \frac{e^{1/24}}{2\sqrt{\pi}} (n+m/2)^{1/2} 2^{2n+m} e^{-\frac{(m/2)^2}{n+m/2}}\notag\\
&\leq\frac{e^{1/24}}{2\sqrt{\pi}}\frac{1}{n(n+m)^{1/2}}  2^{2n+m}.\label{nokBin}
\end{align}
To estimate the sum of the first fractions in \eqref{TwoFractionsBigSum}, we proceed as follows. 
For $k=m/2+1$, we simply have that

\begin{multline*}
\frac{\Gamma(m+2k+1/2)}{\Gamma(m-2k+1/2)\Gamma(4k+1)}=
\frac{\Gamma(2m+5/2)}{\Gamma(-3/2)\Gamma(2m+5)}\leq\\
\leq\frac{1}{\Gamma(-3/2)}\frac{\Gamma(2m+3)}{\Gamma(2m+5)}
=\frac{3}{4\sqrt{\pi}}\frac{1}{(2m+4)(2m+3)}.
\end{multline*} 
On the other hand, as long as\footnote{This  does not work for $k=m/2+1$ because the assumptions of Corollary \ref{GammaBeta} are not met and the conclusion fails.} $0\leq k\leq m/2$, we can use  Corollary \ref{GammaBeta} followed by Lemma \ref{GaussianBounds} with $x=m/2+k+1$ and $d=|3k-m/2|$ to conclude that:
 
 \begin{align*}
 \sum_{k=0}^{\frac{m}{2}}  \frac{\Gamma(m+2k+1/2)}{\Gamma(m-2k+1/2)\Gamma(4k+1)}
 &\leq
 \sum_{k=0}^{\frac{m}{2}}  \frac{\Gamma(m+2k+1)}{\Gamma(m-2k+1)\Gamma(4k+1)}\\
&= \sum_{k=0}^{\frac{m}{2}} \frac{\Gamma(m+2k+2)}{\Gamma(m-2k+1)\Gamma(4k+1)}\frac{1}{m+2k+1} \\
 &\leq \frac{2 e^{1/24}}{\sqrt{\pi}} 2^m \sum_{k=0}^{\frac{m}{2}} \frac{(m/2+k+1)^{1/2}}{m+2k+1}4^{k} e^{-\frac{(3k-m/2)^2}{m/2+k+1}}.
 \end{align*}
For $0\leq k\leq m/2$, it is easy to check that the quantity
$\frac{(m/2+k+1)^{1/2}}{m+2k+1}$
is decreasing in $k$. Moreover, for $k=0$, we have that

$$\frac{(m/2+1)^{1/2}}{m+1}\leq (m+1)^{-1/2}.$$
Using this, we are left to estimate the Gaussian sum

$$\Upsilon_m:=\sum_{k=0}^{\frac{m}{2}} 4^{k} e^{-\frac{(3k-m/2)^2}{m/2+k+1}}.$$
We start with the trivial estimate
$$\Upsilon_m\leq \sum_{k=0}^{\frac{m}{2}} 4^{k} e^{-\frac{(3k-m/2)^2}{m+1}}.$$
Changing variables of summation $3\ell=3k-m/2$, we see that

$$\Upsilon_m\leq 2^{m/3}\sum_{\ell\in L} 4^\ell  e^{-\frac{(3\ell)^2}{m+1}},$$
where $L$ is the new summation set, given by
$$L:=\Big\{-\frac m 6,-\frac m 6+1,\ldots,\frac m 3\Big\}\subset \frac{1}{3}\Z.$$
We estimate this sum by the product of the largest term and the number of terms $\# L={m}/{2}+1$. To detect the largest term, define the function

$$\varphi_m(x):=4^x  e^{-\frac{(3x)^2}{m+1}}.$$
The unique solution $x_0\in [-\frac m 6, \frac m 3]$ to the stationary condition $\varphi_m'(x_0)=0$ is given by
$$x_0=\frac{\log 2}{9} (m+1),$$
for which we have 
$$\varphi_m(x_0)=A^{m+1},$$
where
$$A:={4^{\frac{\log 2}{9}}e^{-(\frac{\log 2}{3})^2}}\leq 1.06.$$
Since
$\varphi_m(\ell)\leq \varphi_m(x_0)$ for every $\ell\in L$,  we thus have that

$$\Upsilon_m\leq \Big( \frac{m}{2}+1 \Big) 2^{m/3} A^{m+1}.$$
It follows that

\begin{align} 
\sum_{k=0}^{\frac{m}{2}+1}  &\frac{\Gamma(m+2k+1/2)}{\Gamma(m-2k+1/2)\Gamma(4k+1)}
\leq\label{kSum}\\
&\leq
 \frac{2 e^{1/24}}{\sqrt{\pi}} (m+1)^{-1/2}2^m \Big( \frac{m}{2}+1 \Big) 2^{m/3} A^{m+1}
 +\frac{3}{4\sqrt{\pi}}\frac{1}{(2m+4)(2m+3)}\notag\\
&\leq
\frac{103}{100} \frac{2 e^{1/24}}{\sqrt{\pi}} (m+1)^{-1/2}2^m \Big( \frac{m}{2}+1 \Big) 2^{m/3} A^{m+1},\notag
 \end{align}
where the last inequality holds since the second summand on the second line amounts to at most $\frac 3{100}$ of the first summand.
Finally, estimates \eqref{nokBin} and \eqref{kSum} together imply that

 \begin{align*}
 \sum_{k=0}^{\frac{m}{2}+1}& |a_{2k}(m)| \Big|\int_0^\infty J_n(r)J_{n+m}(r) 
r^{-2k}
\cos(4r) r^{-1} \, dr\Big| \\
&\leq
\Big(\sum_{k=0}^{\frac{m}{2}+1}  \frac{\Gamma(m+2k+1/2)}{\Gamma(m-2k+1/2)\Gamma(4k+1)}\Big)
\Big(\frac{\Gamma(2n+m)}{\Gamma(n+1)\Gamma(n+m+1)}\Big)4^{-2n-m}\\
&\leq
\Big(\frac{103}{100}  \frac{2 e^{1/24}}{\sqrt{\pi}} (m+1)^{-1/2}2^m \Big( \frac{m}{2}+1 \Big) 2^{m/3} A^{m+1}\Big)
\Big(\frac{e^{1/24}}{2\sqrt{\pi}}\frac{1}{n(n+m)^{1/2}}  2^{2n+m} 
\Big)4^{-2n-m}\\
&\leq
n^{-1}2^{m/3} A^{m} 2^{-2n}.
 \end{align*}
 Since $m\leq n$, we finally get the desired estimate:
 
 $$\leq 
n^{-1}2^{n/3} A^{n} 2^{-2n}
=
n^{-1}\Big(\frac{2^{1/3} A}{4} \Big)^n
\leq
n^{-1} 0.35^n.
$$
This completes the estimate of sum $(i)$ with $\alpha=1$. For the other cases, letting $k\in\{0,1,\ldots,m/2+1\}$ and $1\leq j\leq 7$, one just checks that the bounds given by Lemma \ref{L2}, namely

$$\left|\int_0^\infty J_n(r)J_{n+m}(r) 
r^{-2k-j} e^{4ir}  \, dr\right|
\leq
\frac{2^{2k+j-1}}{4^{2n+m}}\frac{(2n+m-2k-j)!}{n!(n+m)!},
$$ 
are decreasing in $j$ as long as the conditions of the statement are met.
\end{proof}

\noindent {\it Remark.}
We will need the following observation for the purpose of our applications. For $n\geq 20$, we have that ${0.35}^{n/2}\leq n^{-3}$, and so the bound given by Proposition \ref{Careful4rEstimate} can be further estimated as follows:

$$n^{-1} 0.35^n=n^{-1} {0.35}^{n/2} {0.35}^{n/2}\leq  {0.35}^{n_0/2} n^{-4}
\leq 
  0.6^{n_0}  n^{-4},$$
provided $n\geq n_0= 20$.
Alternatively, still for $n\geq 20$, we have that $0.35^{\tau n}\leq n^{-5}$ if $\tau>0.72$. Using this bound instead, we see that

$$n^{-1} 0.35^n=n^{-1} {0.35}^{\tau n} {0.35}^{(1-\tau)n}\leq  {0.35}^{(1-\tau)n_0} n^{-6}\leq  0.75^{n_0}  n^{-6}.$$
All in all, we have the following upper bound for the quantities considered in Proposition  \ref{Careful4rEstimate}:
$$\Big(\min\{  0.6^{n_0} ,  0.75^{n_0}  n^{-2}\} \Big) n^{-4}.$$
This distinction will play a role to ensure good bounds for the $m=4$ terms which were expanded to one higher order in the last subsection. 
\vspace{.2cm}

We are finally ready to estimate the contribution coming from the oscillatory terms $(\beta_0+\beta_2 t^2+\beta_4 t^4)\cos(4r)$ and $(\gamma_1 t+\gamma_3 t^3+\gamma_5 t^5)\sin(4r)$ in expression \eqref{MuCos}, and similarly in \eqref{MuSin}. Appealing to Proposition \ref{Careful4rEstimate} and the remark following it, we see that we can take the following for errors of the second kind:

$$E_{2,*}^{(\cos)}(m,n):=\frac{1}{8}\Big(|\beta_0|+\frac{|\gamma_1|}{16}+\frac{|\beta_2|}{16^2}+\frac{|\gamma_3|}{16^3}+\frac{|\beta_4|}{16^4}+\frac{|\gamma_5|}{16^5}\Big)  \theta^{n_0}  n^{-t};$$
$$E_{2,*}^{(\sin)}(m,n):=\frac{1}{8}\Big(|\gamma_0|+\frac{|\beta_1|}{16}+\frac{|\gamma_2|}{16^2}+\frac{|\beta_3|}{16^3}+\frac{|\gamma_4|}{16^4}+\frac{|\beta_5|}{16^5}\Big)  \theta^{n_0}  n^{-t},$$
where  $(\theta, t)= (0.75, 6)$ if $m=4$ and $(\theta, t)= (0.6, 4)$ if $m\neq 4$. Plugging the values of $\beta,\gamma$ from \eqref{betasgammasCos} and \eqref{betasgammasSin}, we obtain the estimates

\begin{equation}\label{EstErrorSecondKind4}
|E_{2,*}^{(\cos)}(4,n)|, |E_{2,*}^{(\sin)}(4,n)|\leq \begin{cases}
    0.39 \cdot 0.75^{n_0} n^{-6}, & \textrm{if $*=0$},\\
   0.30 \cdot 0.75^{n_0} n^{-6}, & \textrm{if $*=1$},
  \end{cases} 
\end{equation}
and,  if $m\neq 4$, 
\begin{equation}\label{EstErrorSecondKindNot4}
|E_{2,*}^{(\cos)}(m,n)|, |E_{2,*}^{(\sin)}(m,n)|\leq \begin{cases}
    0.39 \cdot 0.6^{n_0} n^{-4}, & \textrm{if $*=0$},\\
    0.30 \cdot 0.6^{n_0} n^{-4}, & \textrm{if $*=1$}.
  \end{cases} 
\end{equation}


\section{Putting it all together}\label{sec:puttingtogether}
In the last section we analyzed the core integrals, which were decomposed into main terms and error terms. We derived several estimates which are recalled below in each case. These are  used together with Estimates A and B to yield appropriate bounds, which are then evaluated at $n_0=20$:

\begin{itemize}
\item[(i)] If $m=0$, then we use the knowledge about the main terms coming from \eqref{MainCos0} and  \eqref{MainSin0}, the estimates for the error terms of the first kind contained in \eqref{E1Cos0} and \eqref{E1Sin0}, and the 
bounds for the error terms of the second kind from \eqref{EstErrorSecondKindNot4}, to conclude that
\begin{multline*}
\left|\int_0^\infty J_n^2(r) \mathfrak{J}_0^4(r)r^{-1}dr-\frac{3}{16}\frac{1}{n}+\frac{3}{128}\frac{1}{(n-1)n(n+1)}\right|\\
\le 
\Big((0.022+0.026+0.0016)n_0^{-1}+0.74n_0^{-5/2}+0.78\cdot 0.6^{n_0}\Big)n^{-4}
\leq
0.0030 n^{-4},
\end{multline*}
and that
\begin{multline*}
\left|\int_0^\infty J_n^2(r) \mathfrak{J}_1^2(r)\mathfrak{J}_0^2(r)r^{-1}dr-\frac{1}{16}\frac{1}{n}-\frac{3}{128}\frac{1}{(n-1)n(n+1)}\right|\\
\le 
\Big((0.023+0.015+0.0030)n_0^{-1}+1.12n_0^{-5/2}+0.60\cdot 0.6^{n_0}\Big)n^{-4}
\leq
0.0028 n^{-4}.
\end{multline*}

\item[(ii)] If $m=2$, then we use the knowledge about the main terms coming from \eqref{MainCos2} and  \eqref{MainSin2}, the estimates for the error terms of the first kind contained in \eqref{E1Cos2} and \eqref{E1Sin2}, and the 
bounds for the error terms of the second kind from \eqref{EstErrorSecondKindNot4}, to conclude that
\begin{multline*}
\left|\int_0^\infty J_{n+2}(r)J_n(r) \mathfrak{J}_2(r)\mathfrak{J}_0^3(r)r^{-1}dr-\frac{15}{256}\frac{1}{n(n+1)(n+2)}\right|\\
\le 
\Big((0.039+0.0062)n_0^{-1}+0.74n_0^{-5/2}+0.162n_0^{-3}+0.78\cdot 0.6^{n_0}\Big)n^{-4}
\leq
0.0028n^{-4},
\end{multline*}
and that
\begin{multline*}
\left|\int_0^\infty J_{n+2}(r)J_n(r) \mathfrak{J}_2(r)\mathfrak{J}_1^2(r)\mathfrak{J}_0(r)r^{-1}dr-\frac{9}{256}\frac{1}{n(n+1)(n+2)}\right|\\
\le 
\Big((0.012+0.0031)n_0^{-1}+1.12n_0^{-5/2}+0.166n_0^{-3}+0.60\cdot 0.6^{n_0}\Big)n^{-4}
\leq
0.0015n^{-4}.
\end{multline*}

\item[(iii)] If $m=4$, then we use the knowledge about the main terms coming from \eqref{MainCos4} and  \eqref{MainSin4}, the estimates for the error terms of the first kind contained in \eqref{E1Cos4} and \eqref{E1Sin4}, and the 
bounds for the error terms of the second kind from \eqref{EstErrorSecondKind4}, to conclude that
\begin{multline*}
\left|\int_0^\infty J_{n+4}(r)J_n(r) \mathfrak{J}_4(r)\mathfrak{J}_0^3(r)r^{-1}dr-\frac{1557}{4096}\frac{1}{n(n+1)(n+2)(n+3)(n+4)}\right|\\
\le 
\Big(0.74n_0^{-1/2}+(0.42+0.086) n_0^{-1}+2.823n_0^{-3}+0.78\cdot 0.75^{n_0}\Big)n^{-6}
\leq
0.197 n^{-6},
\end{multline*}
and that
\begin{multline*}
\left|\int_0^\infty J_{n+4}(r)J_n(r) \mathfrak{J}_4(r)\mathfrak{J}_1^2(r)\mathfrak{J}_0(r)r^{-1}dr-\frac{855}{4096}\frac{1}{n(n+1)(n+2)(n+3)(n+4)}\right|\\
\le 
\Big(1.12n_0^{-1/2}+(0.11+0.063)n_0^{-1}+2.885n_0^{-3}+0.60\cdot 0.75^{n_0}\Big)n^{-4}
\leq
0.264 n^{-6}.
\end{multline*}

\item[(iv)] If $m\geq 6$ is even, then there are no main terms, and we use the estimates for the error terms of the first kind contained in \eqref{E1Cosm} and \eqref{E1Sinm}, and the 
bounds for the error terms of the second kind from \eqref{EstErrorSecondKindNot4}, to conclude that
\begin{multline*}
\left|\int_0^\infty J_{n+m}(r)J_n(r) \mathfrak{J}_m(r)\mathfrak{J}_0^3(r)r^{-1}dr\right|\\
\le 
\Big(0.015 n_0^{-1}+0.74n_0^{-5/2}+(6.34+1.49)n_0^{-3}+0.78\cdot 0.6^{n_0}\Big)n^{-4}
\leq
0.0022 n^{-4},
\end{multline*}
and that
\begin{multline*}
\left|\int_0^\infty J_{n+m}(r)J_n(r) \mathfrak{J}_m(r)\mathfrak{J}_1^2(r)\mathfrak{J}_0(r)r^{-1}dr\right|\\
\le 
\Big(0.015n_0^{-1}+1.12n_0^{-5/2}+(0.09+4.08)n_0^{-3}+0.60\cdot 0.6^{n_0}\Big)n^{-4}
\leq
0.0020 n^{-4}.
\end{multline*}
\end{itemize}
To get the constants promised by Theorem \ref{thm}, one just multiplies the far right-hand sides of each inequality by the normalizing factor $\frac4 {\pi^2}< \frac 1 2$. This concludes the proof of Theorem \ref{thm} for $n\geq 20$.


\section{Numerical estimates for $n<20$}\label{sec:numerical}

In this section, we numerically evaluate the integrals $I_0$ and $I_1$ defined in \eqref{firstintegral}
and \eqref{secondintegral}, respectively,
for $2\le n\le 19$ and even $0\le m\le n$.
We split the integrals into
$$I_j=I_{j,\rm low}+I_{j,\rm high}=\int_0^R \dots dr +\int_R^\infty \dots dr\ .$$
We use a quadrature rule for the first integral and estimate the
second integral by analytic methods. We aim at an absolute error of
at most $0.9 \times 10^{-8}$ for $I_0$ and $I_1$.

The high integral would be entirely 
negligible at our desired accuracy for the threshold (say) $R=10^{10}$, but this would put unnecessarily much computing time
on the low integrals.
We choose $R=63000$ and estimate the high integrals by a more careful analysis of the asymptotic expansion. To bring down the computing time for the low integrals, we use a high degree Newton-Coates quadrature rule. 

We first discuss the high integrals and begin with $I_{0,\rm high}$. 
Since $R$ is large compared to $(n+m)^2$, we take advantage of the 
asymptotic information in Corollary \ref{zlargernsquare}. Splitting each Bessel function into main
term plus error, and applying the distributive law, yields one main integral
of the form
$$I_{0,\rm main,high}=\int_R^\infty \left(\frac{2}{\pi r}\right)^3 
\cos(\omega_{n+m})\cos(\omega_{n})\cos(\omega_{m})\cos^3(\omega_{0})rdr $$
plus $2^6-1$ error terms.

If $n$ is even, since $m$ is even as well, an even number of the integers 
$n,m,n+m$ is congruent two modulo four, and we obtain,
with the periodicity $\cos(\omega_n)=-\cos(\omega_{n+2})$,
$$I_{0,\rm main,high}=I_{0,\rm main,high, even}:=
\int_R^\infty \left(\frac{2}{\pi r}\right)^3 
\cos^6(\omega_{0})rdr\ ,$$
a term which is in fact independent of the particular even $n$ and $m$.
If $n$ is odd, then we obtain similarly
$$I_{0,\rm main,high}=I_{0,\rm main,high, odd}:=
\int_R^\infty \left(\frac{2}{\pi r}\right)^3 
\cos^2(\omega_{1})\cos^4(\omega_{0})rdr\ .$$
Likewise, if $n$ is even, we have 
$$I_{1,\rm main,high}=I_{1,\rm main,high, even}:=
\int_R^\infty \left(\frac{2}{\pi r}\right)^3 
\cos^2(\omega_{1})\cos^4(\omega_{0})rdr\ ,$$
and, if $n$ is odd, 
$$I_{1,\rm main,high}=I_{1,\rm main,high, odd}:=
\int_R^\infty \left(\frac{2}{\pi r}\right)^3 
\cos^4(\omega_{1})\cos^2(\omega_{0})rdr\ .$$

These integrals have closed-form expressions in terms of trigonometric
and trigonometric integral functions. {\it Mathematica} calculates these expression
and evaluates them with prescribed accuracy, resulting in
$$
|I_{0,\rm main, high, even}-1.2798\times 10^{-6}| <10^{-10}, $$
$$|I_{0,\rm main, high, odd}-0.2560\times 10^{-6}|<10^{-10},$$
$$|I_{1,\rm main, high}-0.2560\times 10^{-6}|<10^{-10},$$
where the distinction between even and odd $n$ is not visible at the 
prescribed accuracy in the case of $I_{1,\rm main, high}$.
A sample {\it Mathematica} code used to evaluate $I_{0,\rm main,high, even}$ is the following:
$$\rm N[Integrate[(2/Pi)^3*
   Cos[r - Pi/4]^6 *r^{(-2)}, \{r, 63000, Infinity\}], 
  20]\ .$$

We now estimate the $2^6-1$ error terms of $I_{i,\rm high}-I_{i,\rm main, high}$.
Of these error terms, six of them consist of an integral of a product of five
main terms of Corollary \ref{zlargernsquare} and one error term
of Corollary \ref{zlargernsquare}.
To estimate these six terms, we use the finer information from Corollary 
\ref{zlargernsquarefine} for the error term of Corollary \ref{zlargernsquare}.

The second main term of Corollary \ref{zlargernsquarefine} leads to integrals 
of the type
$$-\frac{4((n+m)^2-1)}8 \Big|\int_R^\infty \left(\frac{2}{\pi r}\right)^3 
\sin(\omega_{n+m})\cos(\omega_{n})\cos(\omega_{m})\cos^3(\omega_{0})dr\Big|$$
and similar terms with a different cosine factor  replaced by a sine factor 
and corresponding prefactor. The product of the six trigonometric functions is odd about the point $\pi/4$.
Thus this product integrates to $0$ over each period. On the period 
$[R+2\pi k, R+2\pi(k+1))$ with any nonnegative integer $k$, we may thus replace the weight 
$r^{-3}$ by the difference between $r^{-3}$ and its mean over that interval. This difference is bounded by $6r^{-4}\pi $ on that interval, hence we may estimate the sum of terms arising
from the second main term of Corollary \ref{zlargernsquarefine} by
$$ 3 \pi ((37)^2+(19)^2+(18)^2+ 3) \int_R^\infty \left(\frac{2}{\pi}\right)^3 r^{-4}dr \le 2.1\times 10^{-11}\ .$$
The sum of the six terms arising from the error terms of Corollary \ref{zlargernsquarefine}
can be further estimated by
$$ \frac 14 ((37)^4+(19)^4+(18)^4+ 3) \int_R^\infty \left(\frac{2}{\pi}\right)^3 r^{-4}dr \le 1.64\times 10^{-9}\ .$$

Next come fifteen terms of the original $2^6-1$ error terms which have four main terms and
two error terms of Corollary \ref{zlargernsquare}. These benefit from an integration of 
the negative fourth power of $r$, and can be estimated by
$$[37^2\times 19^2+37^2\times 18^2 +19^2\times 18^4 + 12\times 36^2]\int_R^\infty \left(\frac{2}{\pi}\right)^3  r^{-4} dr\le 3.32\times10^{-9}\ .$$
The remaining $2^6-1-6-15=42$ terms benefit from an integration of at least the negative fifth power of $r$, and are estimated even more crudely as
$$ 42\times [37^2\times 19^2\times 18^2] \int_R^\infty \left(\frac{2}{\pi}\right)^3 r^{-5} dr\le 4.5\times10^{-10}\ . $$
Adding all these error contributions yields
$$|I_{i,\rm high}-I_{i,\rm main, high}|\le 5.5 \times10^{-9}\ .$$

We next turn to the low integrals. We recall the Newton-Coates rule
$$\int_0^6 f(x)\, dx=F(f)$$
with
$$F(f)=140^{-1}(41f(0)+216f(1)+27f(2 )+272f(3)+27f(4)+216f(5)+41f(6)),$$
which is valid for all real polynomials $f$ up to degree $7$.
For any eight times continuously differentiable function $f$ on $[0,6]$, we have that 

\begin{equation}\label{NCError}
\Big|\int_0^6 f(x)\, dx-F(f)\Big|\le  \frac{6^4}{5} \sup_{\xi\in [0,1]}\frac{|f^{(8)}(\xi)|}{8!}\ .
\end{equation}
A well-known argument shows that 
polynomials of degree eight extremize this inequality. It is then a straightforward matter of checking that polynomials of degree eight, whose eighth derivative is constant, realize the optimal constant $\frac{6^4}{5}$ promised by \eqref{NCError}.

Now let $F_{a,w}$ be the suitably scaled and translated Newton-Coates formula which integrates polynomials of degree $7$ on the interval $[a,a+6w]$ exactly. Then, by rescaling, 

$$\Big|\int_a^{a+6w} f(x)\, dx-F_{a,w}(f)\Big|\le  w^9\frac{6^4}{5}
\sup_{\xi\in [a,a+6w]}\frac {|f^{(8)}(\xi)|}{8!}\ .$$
Now assume that the length of the interval $[a,b]$ is an integer multiple of $6w$,
say $6wN$. Then partitioning this interval into $N$ intervals of length $6w$ and 
applying the Newton-Coates formula on each interval yields
 
$$\Big|\int_a^{b} f(x)\, dx-\sum_{k=0}^{N-1}F_{a+kw,w}(f)\Big|\le  (b-a) w^8\frac{6^3 }{5}
\sup_{\xi\in [a,b]} \frac{|f^{(8)}(\xi)|}{8!}\ .$$

We cut the interval $[0,R]$ into $[0,S]\cup [S,R]$ with
$S=3600$. On the interval $[0,S]$, we estimate the eighth derivative of the functions
$$f(r)=J_{n+m}(r) J_n(r) J_m (r) J_0^3(r)r$$
and
$$f(r)=J_{n+m}(r) J_n(r) J_m (r) J_1^2(r)J_0(r)r$$
using the Cauchy integral formula for the circle of radius $1$ about $r$ 
together with the trivial bound \eqref{EstJnIm} to obtain the estimate
$$|f^{(8)}(r)|\le 8! e^6 (S+1)\ .$$
Approximating the integral over $[0,S]$ by the above summation rule with width 
$w=0.003$ gives the error bound
$$S (.003)^8 \frac{6^3 }{5} 
e^6(S+1) \le 1.49\times 10^{-9}\ .$$

On the interval $[S,R]$, we estimate the eighth derivative of $f$ again by the Cauchy
integral formula with circles of radius one. We use the estimate from Corollary 
\ref{zlargernsquare}
for $J_n^+$ to obtain, for $\Re(z)>S-1$ and $\Im(z)\le 1$,
$$|J_n^+(z)|\le \Big(\frac{2}{\pi|z|}\Big)^{1/2} (1+n^2/S)\cosh(\Im(z))\times 1.01 
\ ,$$
and similarly for $J_n^-$. Estimating the product
of the various terms analogous to $(1+n^2/S)\times 1.01$ by $3$, we then obtain
$$|f^{(8)}(r)|\le 3\times 8!\times  \Big(\frac{2}{\pi(S-1)}\Big)^3 (\cosh(1))^6(R+1)\ .$$

Approximating the integral over $[S,R]$ by the above summation rule with width 
$w=0.05$ gives the error bound
$$
3\times (R-S) w^8 \frac{6^3 }{5}  \Big(\frac{2}{\pi(S-1)}\Big)^3 (\cosh(1))^6(R+1)\le 
1.42\times 10^{-9}\ .$$

Collecting error terms, we obtain 

$$|I_0- 1.2798\times 10^{-6}-F_{[0,S]}- F_{[S,R]}|\le 0.85\times 10^{-8}$$
if $n$ is even, and 
$$|I_0- 0.256\times 10^{-6}-F_{[0,S]}- F_{[S,R]}|\le 0.85\times 10^{-8}$$
if $n$ is odd, and 
$$|I_1- 0.256\times 10^{-6}-F_{[0,S]}- F_{[S,R]}|\le 0.85\times 10^{-8}$$
for any $n$, where $F_{[0,S]}$ and $F_{[S,R]}$ are the 
quadrature formulae described above for the corresponding integrals.

We evaluate $F_{[0,S]}$ and $F_{[S,R]}$ using {\it Mathematica}. Products of Bessel functions at the grid points are computed with 20-digit precision, and the corresponding rounding errors for $F_{[0,S]}+F_{[S,R]}$ can be safely estimated by $0.05\times 10^{-8}$.
As an example,
in the case $n=14$ and $m=4$ for $I_0$, we use the following code to compute $F_{[0,S]}$:

\begin{eqnarray*} 
&\rm BJ[x{\_}] :=
 N[BesselJ[18, x]*BesselJ[14, x]*BesselJ[4, x]*BesselJ[0, x]^3*
   x, 20]\\
&\rm BJSA := 41*BJ[0] +\\
 & \rm82*Sum[BJ[x], \{x, 18/1000, 3599982/1000, 18/1000\}] +\\
 &  \rm216*Sum[BJ[x], \{x, 3/1000, 3599985/1000, 18/1000\}] +\\
 & \rm27*Sum[BJ[x], \{x, 6/1000, 3599988/1000, 18/1000\}] +\\
 & \rm272*Sum[BJ[x], \{x, 9/1000, 3599991/1000, 18/1000\}] +\\
 & \rm27*Sum[BJ[x], \{x, 12/1000, 3599994/1000, 18/1000\}] +\\
 & \rm216*Sum[BJ[x], \{x, 15/1000, 3599997/1000, 18/1000\}] +\\
 & \rm41*BJ[3600]\\
&\rm(BJSA*.018)/840
\end{eqnarray*}

 For $m=0$ and even $n$, Table \ref{table:bigtable} lists  
upper bounds for the quantities
$$\Big(\Big|\frac{3}{4\pi^2}\frac{1}{n}-\frac{3}{32 \pi^2}\frac{1}{(n-1)n(n+1)}
- 1.2798\times 10^{-6}-F_{[0,S]}- F_{[S,R]}\Big|+ 0.9\times10^{-8}\Big)100n^4$$
on top of each entry, and for
$$\Big(\Big|\frac{1}{4\pi^2}\frac{1}{n}+\frac{3}{32 \pi^2}\frac{1}{(n-1)n(n+1)}
- 0.256\times 10^{-6}-F_{[0,S]}- F_{[S,R]}\Big|+ 0.9\times 10^{-8}\Big)100n^4$$
at the bottom of each entry, with the appropriate quadrature formulae $F_{[0,S]}$
and $F_{[S,R]}$ described above. For $m=0$ and odd $n$, it similarly lists upper bounds for
$$\Big(\Big|\frac{3}{4\pi^2}\frac{1}{n}-\frac{3}{32 \pi^2}\frac{1}{(n-1)n(n+1)}
- 0.256\times 10^{-6}-F_{[0,S]}- F_{[S,R]}\Big|+ 0.9\times 10^{-8}\Big)100n^4$$
on top, and for
$$\Big(\Big|\frac{1}{4\pi^2}\frac{1}{n}+\frac{3}{32 \pi^2}\frac{1}{(n-1)n(n+1)}
- 0.256\times 10^{-6}-F_{[0,S]}- F_{[S,R]}\Big|+ 0.9\times 10^{-8}\Big)100n^4$$
at the bottom. Thus each entry on the first column ($m=0$) of Table \ref{table:bigtable}, divided by $100$,
provides a constant $c$ for which the estimate of Theorem \ref{thm} 
holds for the corresponding $n$ with $c$ in place of $0.002$ or $0.0015$.
The entries of Table \ref{table:bigtable} for $m>0$ are analogous.

The poorer constants near $n=19$ are artificial and due to the chosen numerical accuracy 
$0.9\times 10^{-8}$; note that, for this value of $n$, the quantity $0.9\times 10^{-8}n^4$ is already close to $0.0014$.
The very good constants at $m=4$ are due to the extra term in the expansion
that has been elaborated in that case.

\begin{center}
\begin{table}
\begin{tabular}{l||l|l|l|l|l|l|l|l|l|l|}

$n$ $\backslash$ $m$& 0 & 2 & 4  & 6 & 8 & 10 & 12 & 14 & 16 & 18\\  \hline 
2 & .85&.14 & & & & & & & &\\
 &.64 &.03 & & & & & & & &\\\hline 
3 &.44 &.16 & & & & & & & &\\
 &.21 &.05 & & & & & & & &\\\hline 
4 &.33 &.16 &.03 & & & & & & &\\
 &.16 &.04 &.01 & & & & & & &\\\hline 
5 &.26 &.15 &.02 & & & & & & &\\
 &.12 &.04 &.01& & & & & & &\\\hline 
6 &.22 &.15 &.02 &.11 & & & & & &\\
 &.10 &.04 &.01 &.06 & & & & & &\\\hline 
7 &.19 &.14 &.02 &.09 & & & & & &\\
 &.09 &.04 &.01 &.05 & & & & & &\\\hline 
8 & .17 &.13 &.02 &.08 &.02 & & & & &\\
 &.08 &.04 &.01 &.05 &.02 & & & & &\\\hline 
 9 &.15 &.13 &.02 &.07 &.02 & & & & &\\
 &.07 &.04 &.01 &.05 &.02 & & & & &\\\hline 
10 & .14 &.13 &.02 &.07 &.02 &.02 & & & &\\
 &.07 &.04 &.02 &.04 &.02 &.02 & & & &\\\hline 
 11& .13 &.13 &.02 &.07 &.03 &.02 & & & &\\
 &.07 &.04 &.02 &.04 &.02 &.02 & & & &\\\hline 
 12& .13 &.13 &.03 &.07 &.03 &.03 &.03 & & &\\
 &.07 &.05 &.03 &.05 &.03 &.03 &.03 & & &\\\hline 
 13& .13 &.13 &.04 &.07 &.04 &.03 &.03 & & &\\
 &.08 &.05 &.03 &.05 &.04 &.03 &.03 & & &\\\hline 
 14& .13 &.13 &.05 &.07 &.05 &.04 &.04 &.04 & &\\
 &.08 &.06 &.04 &.06 &.05 &.04 &.04 &.04 & &\\\hline 
 15& .14 &.14 &.06 &.08 &.06 &.06 &.06 &.06 & &\\
 &.09 &.07 &.06 &.07 &.06 &.06 &.06 &.06 & &\\\hline 
 16& .15&.15 &.07 &.09 &.07 &.07 &.07 &.07 &.07 &\\
 &.10 &.09 &.07 &.08 &.07 &.07 &.07 &.07 &.07 &\\\hline 
 17& .16 &.17 &.09 &.11 &.09 &.09 &.09 &.09 &.09 &\\
 &.12 &.10 &.09 &.10 &.09 &.09 &.09 &.09 &.09 &\\\hline 
 18& .18 &.19 &.11 &.13 &.11 &.11 &.11 &.11 &.11 &.11\\
 &.14 &.13 &.11 &.12 &.11 &.11 &.11 &.11 &.11 &.11\\\hline 
 19& .20 &.20 &.14 &.15 &.14 &.14 &.14 &.14 &.14 &.14\\
 &.16 &.15 &.14 &.14 &.14 &.14 &.14 &.14 &.14 &.14\\\hline 
\end{tabular}
\caption{}
\label{table:bigtable}
\end{table}
\end{center}

%


\newpage

\end{document}